\newtheorem{proposition}{Proposition}[section]
\newtheorem{lemma}[proposition]{Lemma}
\newtheorem{corollary}[proposition]{Corollary}
\newtheorem{theorem}[proposition]{Theorem}
\theoremstyle{definition}
\newtheorem{definition}[proposition]{Definition}
\newtheorem{example}[proposition]{Example}
\newtheorem{examples}[proposition]{Examples}
\newtheorem{remark}[proposition]{Remark}
\newcommand{\thlabel}[1]{\label{th:#1}}
\newcommand{\thref}[1]{Theorem~\ref{th:#1}}
\newcommand{\selabel}[1]{\label{se:#1}}
\newcommand{\seref}[1]{Section~\ref{se:#1}}
\newcommand{\lelabel}[1]{\label{le:#1}}
\newcommand{\leref}[1]{Lemma~\ref{le:#1}}
\newcommand{\prlabel}[1]{\label{pr:#1}}
\newcommand{\prref}[1]{Proposition~\ref{pr:#1}}
\newcommand{\colabel}[1]{\label{co:#1}}
\newcommand{\coref}[1]{Corollary~\ref{co:#1}}
\newcommand{\relabel}[1]{\label{re:#1}}
\newcommand{\exlabel}[1]{\label{ex:#1}}
\newcommand{\exref}[1]{Example~\ref{ex:#1}}
\newcommand{\delabel}[1]{\label{de:#1}}
\newcommand{\deref}[1]{Definition~\ref{de:#1}}
\newcommand{\eqlabel}[1]{\label{eq:#1}}
\newcommand{\equref}[1]{(\ref{eq:#1})}
\def\RR{{\mathbb R}}
\newcommand{\Cc}{\mathcal{C}}
\def\*C{{}^*\hspace*{-1pt}{\Cc}}
\def\text#1{{\rm {\rm #1}}}
\begin{document}

\title[Crossed products of $4$-algebras. Applications]
{Crossed products of $4$-algebras. Applications}

\author{G. Militaru}
\address{Faculty of Mathematics and Computer Science, University of Bucharest, Str.
Academiei 14, RO-010014 Bucharest 1, Romania and Simion Stoilow Institute of Mathematics of the Romanian Academy, P.O. Box 1-764, 014700 Bucharest, Romania}
%\address{Simion Stoilow Institute of Mathematics of the Romanian Academy, P.O. Box 1-764, 014700 Bucharest, Romania}
\email{gigel.militaru@fmi.unibuc.ro and gigel.militaru@gmail.com}
\subjclass[2010]{16T10, 16T05, 16S40}

\thanks{This work was supported by a grant of the Ministry of Research,
Innovation and Digitization, CNCS/CCCDI--UEFISCDI, project number
PN-III-P4-ID-PCE-2020-0458, within PNCDI III}

\subjclass[2020]{17A60, 17A30, 17D92} \keywords{Non-associative algebra, extension problem, non-abelian cohomology.}

\begin{abstract} A $4$-algebra is a commutative algebra $A$ over a field $k$ such that $(a^2)^2 = 0$, for all $a \in A$. We have proved recently \cite{Mil} that $4$-algebras play a prominent role in the classification of finite dimensional Bernstein algebras. Let $A$ be a $4$-algebra, $E$ a vector space and $\pi : E \to A$ a surjective linear map with $V = {\rm Ker} (\pi)$. All $4$-algebra structures on $E$ such that $\pi : E \to A$ is an algebra map are described and classified by a global cohomological object ${\mathbb G} {\mathbb H}^{2} \, (A, \, V)$. Any such $4$-algebra is isomorphic to a crossed product $V \# A$ and
${\mathbb G} {\mathbb H}^{2} \, (A, \, V)$ is a coproduct, over all $4$-algebras structures $\cdot_V$ on $V$, of all non-abelian cohomologies ${\mathbb H}^{2}_{\rm nab} \, \bigl(A, \, (V, \, \cdot_{V} )\bigl)$, which are the classifying objects for all extensions of $A$ by $V$. Several applications and examples are provided: in particular, ${\mathbb G} {\mathbb H}^{2} \, (A, \, k)$ and ${\mathbb G} {\mathbb H}^{2} \, (k, \, V)$ are explicitly computed and the Galois group ${\rm Gal} \, (V \# A/ V )$  of the extension $V \hookrightarrow V \# A$ is described.
\end{abstract}

\maketitle

\section*{Introduction}
Bernstein algebras were introduced independently by Lyubich \cite{Lj} and Holgate \cite{Hol} as an algebraic tool to answer the Bernstein problem \cite{bern1} which consists in classifying all possible situations of a population that attains genetic equilibrium after one generation (\cite[Section 4]{reed}, \cite[Chapter 9]{WB}). A Bernstein algebra is a commutative algebra $B$ over a field $k$ of characteristic $\neq 2$ such that there exists a non-zero morphism of algebras $\omega: B \to k$ such that $(x^2)^2 = \omega (x)^2 \, x^2$, for all $x \in B$. For algebrists, the Bernstein problem can now be rephrased as follows: \emph{for a given positive integer $n$, describle and classify all simplicial stochastic Bernstein $\RR$-algebras of dimension $n$}. For more details we refer to the work of Gutierrez Fernandez \cite{Fer1} which completely solved this problem. Now, leaving aside the simplicial stochastic condition and replacing $\RR$ by an arbitrary field $k$, the problem of classifying finite dimensional Bernstein algebras is still open: it was solved only up to dimension $4$ and there are partial answers in dimensions $5$ or $6$ (see \cite{corte, cortem,  Fer2, Hol, Lj01, WB}).

If $(B, \omega)$ is a Bernstein algebra, then its barideal $A := {\rm Ker} (\omega)$ is itself a commutative algebra satisfying the compatibilities $(a^2)^2 = 0$, for all
$a \in A$: we called this class of algebras \emph{$4$-algebras} \cite{Mil} and we have proved that they play the key role in the structure and classification of Bernstein algebras. It is worth pointing out that $4$-algebras are a special case of \emph{admissible cubic algebras} as introduced by Elduque and Okubo \cite{el}: these are commutative algebras satisfying the identity $(a^2)^2 = N(a) \, a$, where $N: A \to k$ is a cubic form. In \cite[Theorem 2.6]{Mil} we prove that any Bernstein algebra $B$ is isomorphic to a semidirect product $A \ltimes_{(\cdot, \, \Omega)} \, k$, where $(A, \cdot)$ is a $4$-algebra and $\Omega$ is a \emph{Bernstein operator} on $A$, i.e. $\Omega = \Omega^2 \in {\rm End}_k (A)$ is an idempotent endomorphism of $A$ such that for any $x\in A$:
\begin{equation*} \eqlabel{berdat0}
x^2 \cdot \Omega (x) = 0, \qquad \Omega(x)^2 + \Omega(x^2) = x^2. 
\end{equation*}
The classification of two such Bernstein algebras $A \ltimes_{(\cdot, \, \Omega)} \, k$ and $A' \ltimes_{(\cdot', \, \Omega')} \, k$ is also proven in
\cite[Theorem 2.7 and Theorem 2.11]{Mil}. Based on this, the first step we have to take in order to classify all Bernstein algebras is to classify all $4$-algebras of a given
finite dimension $n$. For this reason the paper is devoted to the study of this class of algebras. An efficient tool for classifying finite objects and a source for developing cohomology theories is the \emph{extension problem} introduced by H\"{o}lder \cite{holder} at the level of groups and intesively studied in the last 100 years for many categories of algebras such as associative algebras \cite{Ev}, Lie algebras \cite{CE}, Hopf algebras \cite{AD}, Poisson algebras \cite{hue}, Lie-Rinehart algebras \cite{CML}, etc. For $4$-algebras the extension problem consists of the following question: \emph{Let $A$ and $V$ be two given $4$-algebras. Describe and classify all extensions of $A$ by $V$, i.e. all triples $(E, i, \pi)$ consisting of a $4$-algebra $E$ and two  morphisms of algebras that fit into an exact sequence of the form:} $\xymatrix{ 0 \ar[r] & V \ar[r]^{i} & E \ar[r]^{\pi} & A \ar[r] & 0 }$.

Two extensions $(E, \, i, \, \pi)$ and $(E', \, i', \, \pi')$ of $A$ by $V$ are called \emph{equivalent} (or \emph{cohomologous}) if there exists a
morphism $\varphi: E \to E'$ of algebras that stabilizes $V$ and co-stabilizes $A$, i.e. $\varphi \circ i = i'$ and $\pi' \circ \varphi = \pi$. Any such a morphism is an isomorphism and we denote by ${\rm Ext} \, (A, \, V )$ the set of equivalence classes of all $4$-algebras that are extensions of $A$ by $V$; an answer to the extension problem means to calculate explicitly ${\rm Ext} \, (A, \, V )$ for two given $4$-algebras $A$ and $V$. The Schreirer \cite{sch} approach to the extension problem for groups works, mutatis-mutandis, also for $4$-algebras as for others varieties of algebras: the classifying object ${\rm Ext} \, (A, \, V )$ is parameterized by the non-abelian cohomology ${\mathbb H}^{2}_{\rm nab} \, \bigl(A, \, V \bigl)$ (\coref{desccompcon}). More general than the extension problem is what we have called \cite{Mi2013} \emph{global extension problem} (GE-problem) and was studied for Leibniz algebras, associative algebras, Poisson algebras or Jacobi-Jordan algebras \cite{am-2015b, am-ja3, am-2015}. The GE-problem, formulated for $4$-algebras, is the following question:

\textbf{Global Extension Problem.} \emph{Let $A$ be a $4$-algebra, $E$ a vector space and $\pi : E \to A$ a linear epimorphism of vector spaces. Describe and classify the set of all $4$-algebra structures that can be defined on $E$ such that $\pi : E \to A$ becomes a morphism of algebras.}

The difference between the GE-problem and the classical extension problem is explained in detail in \cite{am-2015b}. \prref{hocechiv} proves that any such a $4$-algebra structure $\cdot_E$ on $E$ is isomorphic to a \emph{crossed product} $V \# A = V \#_{(\triangleright, \, f, \, \cdot_V)} A$, which is a $4$-algebra associated to $A$ and $V := {\rm Ker} (\pi)$ connected by a \emph{weak action} $\triangleright : A \times V \to V$, a \emph{symmetric non-abelian $2$-cocycle} $f: A \times A \to V$ and a $4$-algebra structure
$\cdot_V$ on $V$ satisfying the axioms of \prref{crossprod}. The crossed product $V \# A$ is a $4$-algebra containg $V$ as an ideal and the Galois group ${\rm Gal} \, (V \# A/ V )$  of the extension $V \hookrightarrow V \# A$ is described in \coref{galgr} as a subgroup of the semidirect product of groups ${\rm Hom}_k (A, \, V) \ltimes {\rm GL}_k (A)$. The main result of the paper is \thref{main1222} that gives the theoretical answer to the GE-problem: the classifying object for the
GE-problem is parameterized by a global non-abelian cohomological object denoted by ${\mathbb G} {\mathbb H}^{2} \, (A, \, V)$. \coref{desccompcon} proves that ${\mathbb G}
{\mathbb H}^{2} \, (A, \, V)$ is the coproduct of all non-abelian cohomologies ${\mathbb H}^{2} \, \, (A, \, (V, \cdot_V))$, the latter being the classifying object for the classical extension problem at the level of $4$-algebras. Several examples and applications are given in \seref{exappl}: in particular, ${\mathbb G} {\mathbb H}^{2} \, (A, \, k)$ and ${\mathbb G} {\mathbb H}^{2} \, (k, \, V)$ are computed and the structure of metabelian $4$-algebras is given in \coref{metstucture}.

\section{Preliminaries}\selabel{prel}
Throughout this paper all vector spaces, linear or bilinear maps are over a field $k$ of characteristic $\neq 2$. For a family of sets $(A_i)_{i\in I}$ we shall denote by $\amalg_{i\in I} \, A_i$ their coproduct, i.e. $\amalg_{i\in I} \, A_i$ is the disjoint union of all sets $A_i$. If $A$ and $V$ are two vector spaces, ${\rm Hom}_k (A, \, V)$ denotes the vector space of all linear maps $A \to V$ and ${\rm Sym} (A\times A; \, V)$ the set of all symmetric bilinear maps $f: A\times A \to V$; ${\rm End}_k \, (A)$ is the usual associative and unital endomorphisms algebra of $A$ and ${\rm GL}_k (A)$ is the automorphisms group of $A$. We denote by ${\rm Hom}_k (A, \, V) \ltimes {\rm GL}_k (A) : = {\rm Hom}_k (A, \, V) \times {\rm GL}_k (A) $ the semidirect product of groups having the multiplication defined for any $(r, \, \alpha)$ and $(r', \,  \alpha') \in {\rm Hom}_k (A, \, V) \times {\rm GL}_k (A)$ by:
\begin{equation}\eqlabel{semidirect0}
(r, \, \alpha) \bullet (r', \,  \alpha') := (r'+ r\circ \alpha' , \, \alpha\circ \alpha' ). 
\end{equation}

A \emph{$4$-algebra} \cite{Mil} is a vector space $A$ together with a bilinear map $\cdot : A \times A \to A$, called multiplication,
such that for any $a$, $b\in A$ we have:
\begin{equation} \eqlabel{4algb}
a\cdot b = b\cdot a, \qquad (a^2)^2 = 0. 
\end{equation}
The concepts of subalgebras, ideals, morphisms of algebras, etc. for $4$-algebras are defined in the obvious way. The class of $4$-algebras were studied before in \cite{guzzo} where it was proved that any $4$-algebra of dimension $\leq 7$ is solvable and it was conjectured that any finite dimensional $4$-algebra is solvable. Any vector space $V$ is a $4$-algebra with the trivial multiplication $x \cdot y : = 0$, for all $x$, $y\in V$: we call this algebra \emph{abelian} and it will be denoted by $V_0$. If $(A, \omega)$ is a Bernstein algebra, then the barideal ${\rm Ker} (\omega)$ is a $4$-algebra. Conversely, if $A$ is a $4$-algebra and $\Omega = \Omega^2 \in {\rm End}_k (A)$ is a Bernstein operator on $A$, then $A\times k$ has a canonical structure of Bernstein algebra with the barideal $A$ (for details see \cite[Proposition 2.3]{Mil}). Linearizing several times the second compatibility of \equref{4algb} we obtain that in a $4$-algebra over a field of characteristic $\neq 2, \, 3$ the following relations hold \cite{guzzo}:
\begin{eqnarray*}
&& a^2 \cdot (a \cdot b) = 0, \qquad a^2 \cdot b^2 + 2 \, (a\cdot b)^2 = 0, \qquad
a^2 \cdot (b\cdot c) + 2\, (a\cdot b) \cdot (a\cdot c) = 0,  \\
&& (a\cdot b) \cdot (c\cdot d) + (a\cdot c) \cdot (b\cdot d) + (a\cdot d) \cdot (b\cdot c) = 0
\end{eqnarray*}
for all $a$, $b$, $c$, $d \in A$. For a $4$-algebra $A$ we denote by $A' := A\cdot A$ its derived algebra, i.e. $A'$ is the $k$-subspace of $A$ generated by all $a \cdot b$,
for any $a$, $b\in A$. Similarly to the groups or Lie algebras theory, a $4$-algebra $A$ is called \emph{metabelian} if $A'$ is an abelian subalgebra of $A$, i.e. $(a\cdot b) \cdot (c\cdot d) = 0$, for all $a$, $b$, $c$, $d\in A$. Throughout this paper we use the following convention: the multiplication of a $4$-algebra $A$ will be written
on the elements of a $k$-basis $\{e_i \, | \, i\in I \}$ of $A$ and undefined multiplications are all zero.

\begin{examples} \exlabel{dimmici}
(1) Any $1$-dimensional $4$-algebra is isomorphic to the abelian one: $e_1 \cdot e_1:= 0$. We can easily prove that, up to an isomorphism, 
there are exactly three $2$-dimensional $4$-algebras, namely the abelian one and the algebras with the basis $\{e_1, \, e_2\}$ and the multiplication given by:
\begin{eqnarray*}
&A_1:& e_1^2 = e_2, \quad e_1 e_2 = e_2^2 = 0,\\
&A_2:& e_1 e_2 = e_2, \quad e_1^2 = e_2^2 = 0. 
\end{eqnarray*}

(2) Let $n$ be a positive integer and $\mathfrak{h} (2n + 1)$ the $(2n + 1)$-dimensional algebra having $\{e_1, \cdots, e_n, f_1, \cdots, f_n, z \}$ as a basis and
multiplication defined by $e_i \cdot f_i = f_i \cdot e_i := z$, for all $i = 1, \cdots, n$ and the
other products of basis elements are zero. Then $\mathfrak{h} (2n + 1)$ is a $4$-algebra called the commutative Heisenberg $4$-algebra.

(3) Let $m$ and $n$ be two positive integers and two bilinear maps
$$
\triangleright : k^m \times k^n \to k^n, \qquad f: k^m \times k^m \to k^n
$$
such that $f$ is symmetric. Let $\{e_i \, | \, i = 1, \cdots, n\}$ be a basis of $k^n$ and $\{f_j \, | \, j = 1, \cdots, m\}$ a basis of $k^m$. Let $A$ be the
$(n + m)$-dimensional algebra with the basis $\{e_i, \, f_j \, | \, i = 1, \cdots, n, \, j = 1, \cdots, m \}$ and the multiplication defined by:
$$
e_i \cdot f_j = f_j \cdot e_i := f_j \triangleright e_i, \qquad f_j \cdot f_l := f (f_j, \, f_l)
$$
for all $i = 1, \cdots, n$, $j$, $l = 1, \cdots m$, and the other products of basis elements are zero. Then $A$ is a metabelian $4$-algebra and will be denoted by ${\rm Met}_n^m \, (\triangleright, \, f)$: \coref{metstucture} will prove that any finite dimensional metabelian $4$-algebra is isomorphic to such an algebra ${\rm Met}_n^m \, (\triangleright, \, f)$.
\end{examples}

Similarly, with other classes of non-associative algebras we define the concept of modules/representations over $4$-algebras as follows:

\begin{definition}\delabel{module}
An \emph{$A$-module} over a $4$-algebra $A$ is a vector space $V$ with a bilinear map
$\triangleright : A \times V \to V$, called action of $A$ on $V$, such that for any $a\in A$ and
$x\in V$ we have:
\begin{eqnarray} \eqlabel{actiunest}
a^2 \triangleright  (a \triangleright x) = 0. 
\end{eqnarray}
A \emph{representation} of a $4$-algebra $A$ on a vector space $V$ is a linear map
$\varphi : A \to {\rm End}_k \, (V)$ such that for any $a\in A$ we have $\varphi (a^2) \circ \varphi (a) = 0$,
in the endomorphism algebra of $V$.
\end{definition}

\begin{remark} \relabel{eileberg}
The axiom \equref{actiunest} of defining modules over a $4$-algebra was influenced by the view point
of Eilenberg \cite{eilen} of defining modules over a given object ${\mathcal O}$ in a $k$-linear category
${\mathcal C}$: that is, a vector space $V$ with a bilinear map
$\triangleright : {\mathcal O} \times V \to V$ such that the trivial extension $V \times {\mathcal O}$, with the
multiplication given by $(x, a) \cdot (y, b) := (a \triangleright y
+ b\triangleright x, \, a\cdot b)$, has to be an object \emph{inside} the
$k$-linear category ${\mathcal C}$. For details see \exref{abelian} below.

Of course, representations and modules over a $4$-algebras are equivalent concepts and $A$ is a module over itself with
$a \triangleright b : = a \cdot b$, for all $a$, $b\in A$.
\end{remark}

\section{Crossed products and the global extension problem for $4$-algebras} \selabel{crossed}

Let $A$ be a $4$-algebra, $E$ a vector space, $\pi : E \to A$ a linear
epimorphism of vector spaces with $V: = {\rm Ker} (\pi)$ and
denote by $i: V \to E$ the inclusion map. We recall that a linear map
$\varphi: E \to E$ \emph{stabilizes} $V$ (resp.
\emph{co-stabilizes} $A$) if $\varphi \circ i = i$ (resp. $\pi
\circ \varphi = \pi$). Two  $4$-algebra structures $\cdot_E $ and
$\cdot_E'$ on $E$ such that $\pi : E \to A$ is a morphism of $4$-algebras
are called \emph{cohomologous} and we denote this by $(E,
\cdot_E) \approx (E, \cdot_E')$, if there exists an algebra map
$\varphi: (E, \cdot_E) \to (E, \cdot_E')$ which stabilizes $V$ and
co-stabilizes $A$. Any such morphism is
an isomorphism and therefore $\approx$ is an equivalence relation on
the set of all $4$-algebra structures on $E$ such that $\pi : E \to
A$ is an algebra map. The set of all equivalence classes via the
equivalence relation $\approx$ will be denoted by ${\rm Gext} \,
(A, \, E)$ and it is the classifying object for the GE-problem for $4$-algebras. In this section we will prove that ${\rm Gext} \, (A, \, E)$
is parameterized by a global non-abelian cohomological object ${\mathbb G} {\mathbb H}^{2} \, (A, \, V)$ which will be
explicitly constructed. First, we need to introduce the crossed product of $4$-algebras:

\begin{definition} \delabel{cdat}
Let $A = (A, \cdot)$ be a $4$-algebra and $V$ a vector space. A
\emph{crossed data} of $A$ by $V$ is a system ${\mathcal C} \, (A, \, V) =
(\triangleright, \, f, \, \cdot_V)$ consisting of three
bilinear maps
$$
\triangleright \, \, : A \times V \to V, \quad f \,\, : A
\times A \to V, \quad \cdot_V \, : V \times V \to V. 
$$
\end{definition}

For a crossed data ${\mathcal C} \, (A, V) = (\triangleright, \, f,
\, \cdot_V)$ we denote by $V \# A = V \, \#_{(\triangleright, \,
f, \, \cdot_V)} \, A $ the vector space $V \times A$ with the
multiplication $\circ$ defined for any $a$, $b\in A$ and $x$, $y
\in V$ by:
\begin{equation} \eqlabel{hoproduct2}
(x, \, a) \circ (y, \, b) := (x \cdot_V y + a \triangleright y + b \triangleright x  +  f(a, \, b), \,\, a\cdot b). 
\end{equation}
$V \# A$ is called the \emph{crossed product} associated to
${\mathcal C} \, (A, V)$ if it is a $4$-algebra with the multiplication given
by \equref{hoproduct2}. In this case ${\mathcal C} \, (A, V) =
(\triangleright, \, f, \, \cdot_V)$ is called a
\emph{crossed system} of $A$ by $V$. The set of all crossed systems of $A$ by $V$ will be denoted
by ${\mathcal C} {\mathcal S} \, (A, V)$. Our first result provides the
necessary and sufficient conditions for $V \# A$ to be a crossed
product:

\begin{proposition}\prlabel{crossprod}
Let $A = (A, \cdot)$ be a $4$-algebra, $V$ a vector space and ${\mathcal C} \, (A, V) =
(\triangleright, \, f, \, \cdot_V)$ a crossed data of $A$
by $V$. Then $V \# A$ is a crossed product if and only if the
following compatibility conditions hold:
\begin{enumerate}
\item[(CS1)] $(V, \, \cdot_V)$ is a $4$-algebra and the bilinear map
$f : A \times A \to V$ is symmetric; 

\item[(CS2)] $f(a^2, a^2) + f(a, a) \cdot_V f(a, a) + 2\, a^2 \triangleright f(a, a) = 0$; 

\item[(CS3)] $a^2 \triangleright x^2 + x^2 \cdot_V f(a, a) + 2\, x^2 \cdot_V \, (a\triangleright x) + 2\, (a\triangleright x)^2  + 2 \, (a\triangleright x) \cdot_V \, f(a, a) + 2\, a^2 \triangleright (a\triangleright x) = 0$, 
\end{enumerate}
for all $a \in A$ and $x\in V$.
\end{proposition}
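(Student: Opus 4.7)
The plan is to unpack the two axioms of a $4$-algebra for the multiplication $\circ$ of \equref{hoproduct2} on $V \# A = V \times A$, and to match the resulting conditions with (CS1)--(CS3). Commutativity of $\circ$ is straightforward: the identity $(x,a) \circ (y,b) = (y,b) \circ (x,a)$ for all $x$, $y \in V$ and $a$, $b \in A$ reduces, via the commutativity of $A$, to $\cdot_V$ being commutative and $f$ being symmetric, both of which are part of (CS1).

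For the second axiom, I would compute $z^2$ and $(z^2)^2$ for a generic element $z = (x,a)$. From \equref{hoproduct2} one gets $z^2 = \bigl(x^2 + 2\, a \triangleright x + f(a,a),\, a^2\bigr)$. Writing $X := x^2 + 2\, a \triangleright x + f(a,a)$ and using $(a^2)^2 = 0$ in $A$, a second application of \equref{hoproduct2} gives
$$(z^2)^2 = \bigl(X \cdot_V X + 2\, a^2 \triangleright X + f(a^2, a^2),\; 0 \bigr).$$
Hence $V \# A$ satisfies the second $4$-algebra axiom if and only if
$$X \cdot_V X + 2\, a^2 \triangleright X + f(a^2, a^2) = 0$$
holds in $V$ for every $a \in A$ and $x \in V$.

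The key step is to decompose this last identity and match it against (CS2) and (CS3). Specializing $a = 0$ collapses it to $(x^2)^2 = 0$ for all $x \in V$, which combined with the commutativity of $\cdot_V$ already extracted completes (CS1). Specializing $x = 0$ recovers (CS2) verbatim. Substituting these two specializations back into the full expansion, the remaining terms, after dividing by $2$ (legal because $k$ has characteristic different from $2$), match (CS3) term by term; the converse direction simply reassembles the pieces. The main obstacle is purely bookkeeping: one must expand $X \cdot_V X$ into its six symmetric cross terms in $x^2$, $a \triangleright x$ and $f(a,a)$ and verify that the coefficients halve correctly onto (CS3). No algebraic ingredient beyond the $4$-algebra axioms of $A$ is required.
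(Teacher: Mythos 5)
Your proposal is correct and takes essentially the same route as the paper's own proof: commutativity of $\circ$ forces $\cdot_V$ commutative and $f$ symmetric, the square $(x,a)^2 = \bigl(x^2 + 2\, a \triangleright x + f(a,a),\, a^2\bigr)$ is expanded, and the specializations $a=0$ and $x=0$ extract the $4$-algebra axiom for $(V,\cdot_V)$ and (CS2), with the surviving terms (after halving, legitimate since ${\rm char}\, k \neq 2$) being exactly (CS3). Your packaging of the first component as $X \cdot_V X + 2\, a^2 \triangleright X + f(a^2,a^2)$ is just a tidier way of writing the same ten-term expansion the paper displays, so there is nothing to flag.
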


Borrowing the terminology from crossed products of Hopf algebras \cite{AD}, the symmetric bilinear map $f: A\times A \to V$ satisfying (CS2) is called
a \emph{non-abelian $2$-cocycle} while the axiom (CS3) we called the \emph{twisted module condition} for $\triangleright \, : A \times V \to V$.

\begin{proof}
First of all we can easily prove that the multiplication
$\circ$ defined by \equref{hoproduct2} is commutative if and
only if $f: A \times A \to V$ is symmetric and $\cdot_V :
V \times V \to V$ is commutative algebra structure on $V$. Assume that $\circ$ is commutative. Then
for any $x \in V$ and $a \in A$ we have that $(x, a)^2 = (x^2 + 2\, a \triangleright x + f(a, a), \, a^2)$. Based on this and taking into account
that $A$ is a $4$-algebra we obtain that $\bigl((x, a)^2\bigl)^2 = (0, 0)$ if and only if
\begin{eqnarray*}
&& (x^2)^2 + 4 \, (a\triangleright x)^2 + f(a, a)^2  + 4\, x^2 \cdot_V \, (a\triangleright x)  + 2\, x^2 \cdot_V f(a, a) + \\
&& + 4\,  (a\triangleright x) \cdot_V \, f(a, a) + 2\, a^2 \triangleright x^2  +  4\, a^2 \triangleright (a\triangleright x) + 2 \, a^2 \triangleright f(a, a) +
f(a^2, a^2) = 0. 
\end{eqnarray*}
This equation holds for $a := 0$ if and only if $(x^2)^2 = 0$, for all $x\in V$ (i.e. $(V, \cdot_V)$ is a $4$-algebra) and it holds for $x :=0$ if and only if (CS2) holds.
Assuming these conditions, we obtain that the above equation holds if and only if (CS3) holds and this finishes the proof.
\end{proof}

From now on a crossed system of a $4$-algebra $A$ by a vector space $V$ will be seen as a
system of bilinear maps ${\mathcal C} \,(A, V) = (\triangleright, \,
f, \, \cdot_V)$ satisfying axioms (CS1)-(CS3) of \prref{crossprod}.

\begin{examples} \exlabel{abelian}
(1) Let ${\mathcal C} \, (A, V) = (\triangleright, \, f,
\, \cdot_V)$ be a crossed data of a $4$-algebra $A$ by $V$ such that $f$ is the trivial map, i.e. $f(a, b) := 0$, for all $a$, $b\in A$. Then,
applying \prref{crossprod} we obtain that $(\triangleright, \, f := 0,
\, \cdot_V)$ is a crossed system if and only if $(V, \cdot)$ is a $4$-algebra and for all $a\in A$, $x\in V$ we have:
\begin{eqnarray} \eqlabel{semidirect}
a^2 \triangleright x^2 + 2\, x^2 \cdot_V \, (a\triangleright x) + 2\, (a\triangleright x)^2  + 2\, a^2 \triangleright (a\triangleright x) = 0. 
\end{eqnarray}
The associated crossed product will be called a \emph{semidirect
product} of $4$-algebras $A$ and $V$ and will be denoted by $V \ltimes A := V
\ltimes_{(\triangleright, \, \cdot_V)} A$. The terminology is
motivated below in \coref{splialg}: exactly as in the
case of groups or Lie algebras, this construction describes split
epimorphisms in the category of $4$-algebras.

Assume, in addition, that the multiplication $\cdot_V$ is also the trivial map. Then, the
compatibility condition \equref{semidirect} becomes $a^2 \triangleright (a\triangleright x) = 0$, for all $a\in A$ and $x\in V$,
i.e. $(V, \triangleright)$ is a $A$-module as we introduced in \deref{module}. In this case, the semidirect product
$V \ltimes A$ is called the \emph{trivial extension} of the $4$-algebra $A$ by the $A$-module
$V$.

(2) The following special case of crossed products play an important role in the classification of finite dimensional $4$-algebras. Assume that $A$ is the abelian $4$-algebra. Then a crossed data ${\mathcal C} \, (A, V) = (\triangleright, \, f, \, \cdot_V)$ of $A$ by $V$  is a crossed system if and only if
$(V, \cdot)$ is a $4$-algebra, $f: A\times A \to V$ is a symmetric bilinear map such that:
\begin{eqnarray} \eqlabel{ptclasc}
f(a, a)^2  = 0, \quad x^2 \cdot_V f(a, a) + 2\, x^2 \cdot_V \, (a\triangleright x) + 2\, (a\triangleright x)^2  + 2 \, (a\triangleright x) \cdot_V \, f(a, a) = 0
\end{eqnarray}
for all $a \in A$ and $x\in V$. In this case, the associated crossed product  $V \# A$
will be called the \emph{twisted product} of $V$ and $A$. If $\{e_i\, | \, i\in I\}$ is a basis
of $V$ and $\{f_j \, | \, j\in J \}$ is a basis of $A$, then the twisted product  $V \# A$ is the $4$-algebra having $\{e_i, \, f_j \, | \, i\in I, \, j\in J\}$ as a basis and the multiplication given by:
\begin{equation} \eqlabel{twpr222}
e_i \circ e_j := e_j \cdot_V e_j, \quad e_i \circ f_l = f_l \circ e_i := f_l \triangleright e_i, \quad
f_l \circ f_m := f (f_l, \, f_m)
\end{equation}
for all $i$, $j\in I$ and $l$, $m\in J$. In \coref{imptwisted} we shall prove that any $4$-algebra is isomorphic to a such a twisted product.

(3) Let ${\mathcal C} \, (A, V) = (\triangleright, \, f,
\, \cdot_V)$ be a crossed data of a $4$-algebra $A$ by $V$ such that $\cdot_V$ is the trivial map. By applying \prref{crossprod} we obtain that
$(\triangleright, \, f, \, \cdot_V := 0)$ is a crossed system if and only if $(V, \triangleright)$ is an $A$-module and
$f: A \times A \to V$ is \emph{symmetric abelian $2$-cocycle}, i.e.
$$
f(a^2, a^2) + 2\, a^2 \triangleright f(a, a) = 0
$$
for all $a\in A$. This case will appear in the study of the classical extension problem for $4$-algebras, namely those with an abelian kernel (see \coref{cazuabspargere} below).

(4) Let ${\mathcal C} \, (A, V) = (\triangleright, \, f,
\, \cdot_V)$ be a crossed data of a $4$-algebra $A$ by $V$ such that $\triangleright$ is the trivial action, i.e. $a \triangleright x:= 0$, for all $a\in A$ and $x\in V$.
Then $(\triangleright :=0, \, f,
\, \cdot_V)$ is a crossed system if and only if $(V, \cdot)$ is a $4$-algebra, $f: A \times A \to V$ is symmetric and
$$
f(a^2, a^2) + f(a, a)^2 = 0, \qquad x^2 \cdot_V f(a, a) = 0
$$
for all $a\in A$ and $x\in V$.
\end{examples}

Let now ${\mathcal C} \, (A, V) = (\triangleright, \, f, \, \cdot_V)$ be a crossed system of a $4$-algebra $A$ by $V$. Then, the canonical projection $\pi_A : V \# A \to
A$, $\pi_A (x, a) := a$ is a surjective algebra map and
$ {\rm Ker} (\pi_A) = V \times \{0\} \cong V$ is an ideal in the $4$-algebra $V \# A$. Thus, we obtain that the $4$-algebra $V \# A$ is an
extension of the $4$-algebra $A$ by the $4$-algebra $(V, \cdot_V)$
via
\begin{eqnarray} \eqlabel{extenho1}
\xymatrix{ 0 \ar[r] & V \ar[r]^{i_{V}} & V \# \, A
\ar[r]^{\pi_{A}} & A \ar[r] & 0 }
\end{eqnarray}
where $i_V (x) = (x, \, 0)$, for all $x\in V$. Conversely, we have:

\begin{proposition}\prlabel{hocechiv}
Let $A$ be a $4$-algebra, $E$ a vector space and $\pi : E \to A$ an
epimorphism of vector spaces with $V = {\rm Ker} (\pi)$. Then any
$4$-algebra structure $\cdot_E $ which can be defined on $E$ such that $\pi : (E, \cdot_E) \to A$ is a morphism
of $4$-algebras is isomorphic to a crossed product $V \# A$.
Furthermore, the isomorphism of $4$-algebras $(E, \cdot_E) \cong V
\# A$ can be chosen such that it stabilizes $V$ and co-stabilizes
$A$.

In particular, any $4$-algebra structure on $E$ such that $\pi : E \to
A$ is an algebra map is cohomologous to an extension of the form
\equref{extenho1}.
\end{proposition}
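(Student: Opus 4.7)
The strategy is the standard splitting trick: choose a linear section of $\pi$, use it to identify $E$ with $V \times A$, and read off the crossed system from the multiplication on $E$.

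First I would pick a linear section $s : A \to E$ of $\pi$ (so $\pi \circ s = {\rm Id}_A$), which exists since $\pi$ is a surjective linear map. This yields the vector space isomorphism
$$
\varphi : V \times A \to E, \qquad \varphi (x,\, a) := x + s(a),
$$
with inverse $\varphi^{-1} (e) = \bigl(e - s(\pi(e)),\, \pi(e)\bigl)$. Note that $V$ is an ideal of $(E, \cdot_E)$ because $\pi$ is an algebra map with kernel $V$.

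Next I would transport $\cdot_E$ along $\varphi$ to produce a candidate crossed data ${\mathcal C}(A, V) = (\triangleright,\, f,\, \cdot_V)$ by the natural formulas
$$
x \cdot_V y := x \cdot_E y, \qquad a \triangleright x := s(a) \cdot_E x, \qquad f(a,\, b) := s(a) \cdot_E s(b) - s(a \cdot b),
$$
for $x$, $y \in V$ and $a$, $b \in A$. The first is well-defined because $V$ is an ideal, the second because $s(a) \cdot_E x$ lies in $V$ (apply $\pi$ to see this), and the third because $\pi\bigl(s(a) \cdot_E s(b) - s(a \cdot b)\bigl) = a \cdot b - a \cdot b = 0$. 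A direct expansion of $\varphi(x,a) \cdot_E \varphi(y,b) = (x + s(a)) \cdot_E (y + s(b))$ then shows that $\varphi$ transports $\cdot_E$ precisely to the multiplication \equref{hoproduct2} on $V \times A$.

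Since $(E, \cdot_E)$ is a $4$-algebra and $\varphi$ is a vector space isomorphism, the transported multiplication also defines a $4$-algebra structure on $V \times A$; by \prref{crossprod}, the triple $(\triangleright, f, \cdot_V)$ thus automatically satisfies (CS1)--(CS3), so ${\mathcal C}(A, V)$ is a crossed system and $(E, \cdot_E) \cong V \# A$ as $4$-algebras via $\varphi$. Finally, $\varphi$ stabilizes $V$ (as $\varphi(x, 0) = x$) and co-stabilizes $A$ (as $\pi(\varphi(x, a)) = \pi(s(a)) = a$), which establishes the main claim; the ``in particular'' part is then immediate by composing with the canonical extension \equref{extenho1} of $V \# A$. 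There is no serious obstacle here — the only point that deserves a verification is that the transported multiplication really has the form \equref{hoproduct2}, and this reduces to the identity $s(a) \cdot_E s(b) = f(a, b) + s(a \cdot b)$, which is the definition of $f$.
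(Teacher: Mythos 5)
Your proposal is correct and follows essentially the same route as the paper's own proof: the same section $s$, the same isomorphism $\varphi(x,a) = x + s(a)$ with the same inverse, the same definitions \equref{acti}--\equref{multip} of $(\triangleright, f, \cdot_V)$, and the same transport-of-structure computation showing the induced multiplication on $V \times A$ is \equref{hoproduct2}. Your explicit appeal to \prref{crossprod} to conclude that the transported data automatically satisfies (CS1)--(CS3), and your well-definedness checks via $\pi$, are points the paper leaves implicit, but they are the same argument.
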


\begin{proof}
Let $\cdot_E $ be a $4$-algebra structure of $E$ such that $\pi:
(E, \cdot_E) \to A$ is an algebra map and let $s : A \to E$ be a $k$-linear section of $\pi$,
i.e. $\pi \circ s = {\rm Id}_{A}$. Then $\varphi : V \times A \to E$, $\varphi (x, a) := x + s(a)$ is an isomorphism of
vector spaces with the inverse $\varphi^{-1} (y) = \bigl(y - s (\pi(y)), \, \pi(y)\bigl)$, for all $y\in E$. Using the section $s$
we define the following bilinear maps:
\begin{eqnarray}
\triangleright &=& \triangleright_{s} \, \, : A \times V \to V,
\,\,\,\,
a \triangleright x := s(a) \cdot_E x \eqlabel{acti}\\
f &=& f_s \,\,\, : A \times A \to V, \,\,\,\,
f (a, b) := s(a) \cdot_E s(b) - s(a \cdot b) \eqlabel{coci} \\
&\cdot_V& \,\,\,\,\,\,\,\,\, : V \times V \to V, \,\,\,\, x \cdot_V y : = x\cdot_E y \eqlabel{multip}
\end{eqnarray}
for all $a$, $b\in A$ and $x$, $y\in V$. These are well-defined maps since $\pi$ is an algebra map and $s$ a section of $\pi$.
Using this crossed data $(\triangleright, \, f, \, \cdot_V)$ connecting $A$ and
$V$ we can prove that the unique $4$-algebra structure $\circ$
that can be defined on the direct product of vector spaces $V
\times A$ such that $\varphi : V \times E \to (E, \cdot_E)$ is an
isomorphism of $4$-algebras is given by:
\begin{equation*}
(x, \, a) \circ (y, \, b) := (x \cdot_V y + a \triangleright y + b \triangleright x  +  f(a, \, b), \, a\cdot b)
\end{equation*}
for all $a$, $b\in A$, $x$, $y \in V$. Indeed, let $\circ $ be
such a $4$-algebra structure on $V\times A$. Then we have:
\begin{eqnarray*}
(x, a) \circ (y, b) &=& \varphi^{-1} \bigl(\varphi(x, a) \cdot_E
\varphi(y, b)\bigl) = \varphi^{-1} \bigl(  ( x + s(a)) \cdot_E (y + s(b)) \bigl) \\
&=& \varphi^{-1} ( x \cdot_E y + s(a) \cdot_E y + x \cdot_E s(b) + s(a) \cdot_E s(b) ) \\
&=& \bigl ( x \cdot_E y + s(a) \cdot_E y + s(b) \cdot_E x + s(a) \cdot_E s(b) - s(a\cdot b), \, a\cdot b  \bigl) \\
&=& \bigl(x \cdot_V y + a \triangleright y + b \triangleright x  +  f(a, \, b), \, a\cdot b)
\end{eqnarray*}
as desired. Thus, $\varphi : V \# A \to (E, \cdot_E)$ is an isomorphism of $4$-algebras which stabilizes
$V$ and co-stabilizes $A$ since the diagram
\begin{eqnarray*}
\xymatrix {& V \ar[r]^{i_V} \ar[d]_{Id} & {V
\# \,\, A }
\ar[r]^{\pi_A} \ar[d]^{\varphi} & A \ar[d]^{Id}\\
& V \ar[r]^{i} & {E}\ar[r]^{\pi} & A}
\end{eqnarray*}
is obviously commutative.
\end{proof}

As a first consequence we obtain that any $4$-algebra is isomorphic to a twisted product as constructed in (2) of
\exref{abelian}.

\begin{corollary} \colabel{imptwisted}
Any $4$-algebra $E = (E, \cdot_E)$ is isomorphic to a twisted product $V \# A$ associated to a $4$-algebra $V$ with 
${\rm dim}_k (V) = {\rm dim}_k (E^2)$ and an abelian algebra $A$.
\end{corollary}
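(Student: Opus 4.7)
The plan is to deduce this corollary directly from \prref{hocechiv} by making a canonical choice of the linear epimorphism $\pi : E \to A$ whose kernel has dimension $\dim_k(E^2)$ and whose target is abelian.

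First I would set $V := E^2 = E\cdot_E E$, which is a linear subspace (the $k$-span of all products) of dimension $\dim_k(E^2)$, and take $A := E/E^2$ endowed with the trivial (zero) multiplication, so that $A$ is an abelian $4$-algebra. The canonical projection $\pi : E \to A$ is a surjective linear map with $\ker(\pi) = V$. Moreover $\pi$ is automatically a morphism of $4$-algebras: for every $x$, $y \in E$ one has $x\cdot_E y \in E^2 = \ker(\pi)$, hence $\pi(x\cdot_E y) = 0 = \pi(x)\cdot \pi(y)$, the latter equality holding since $A$ carries the zero multiplication.

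Second, applying \prref{hocechiv} to this epimorphism provides a crossed system $(\triangleright,\, f,\, \cdot_V) \in {\mathcal C}{\mathcal S}(A, V)$ and an isomorphism of $4$-algebras $\varphi : V\,\#_{(\triangleright,\, f,\, \cdot_V)}\, A \to (E, \cdot_E)$ which stabilizes $V$ and co-stabilizes $A$. The multiplication $\cdot_V$ produced by the construction is the restriction of $\cdot_E$, which is consistent with $V$ being a subalgebra of $E$, and by (CS1) it is automatically a $4$-algebra multiplication. Since the base $A$ is abelian, the crossed product $V\#A$ is exactly the twisted product introduced in \exref{abelian}(2). Together with $\dim_k V = \dim_k E^2$, this establishes the corollary.

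I do not anticipate any genuine obstacle: the argument is a clean application of the preceding proposition, and the only observation of substance is that the quotient $E/E^2$ equipped with the zero multiplication is always a legitimate abelian $4$-algebra target for $\pi$, because $E^2$ need not be an ideal for the quotient vector space to carry the trivial algebra structure that makes $\pi$ an algebra map.
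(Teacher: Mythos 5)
Your proof is correct and takes essentially the same route as the paper: the paper also sets $V := E' = E\cdot_E E$ and $A := E/E'$ with the abelian structure, observes that the canonical projection $\pi : E \to A$ is a surjective morphism of $4$-algebras, and concludes by applying \prref{hocechiv}. Your closing caveat is harmless but unnecessary, since $E^2$ is in fact always an ideal of $E$ (any product $z \cdot_E (x \cdot_E y)$ again lies in $E \cdot_E E$), so the quotient algebra structure on $E/E^2$ exists and coincides with the trivial one you impose.
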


\begin{proof}
Indeed, let $(E, \cdot_E)$ be a $4$-algebra and $V := E'$ its derived algebra. Then $A: = E/E'$ is an abelian algebra and
the canonical projection $\pi : E \to A$ is a surjective $4$-algebra map. Now, we apply \prref{hocechiv}.
\end{proof}

\coref{imptwisted} is a tool that can be used for the classification of $4$-algebras of a given finite dimension.
Let us explain briefly how it works: let $E$ be an $(m+n)$-dimensional $4$-algebra. An invariant of such algebras is the dimension $m$ of the derived algebra. Thus, we
have to fix a positive integer $m$. Based on \coref{imptwisted} we obtain that
$E$ is isomorphic to a twisted product $V \# A$, where $V$ is an $m$-dimensional
$4$-algebra and $A$ is a vector space (viewed with the abelian $4$-algebra structure) of dimension $n$ (explicitely, the multiplication on $V \# A$ is given by \equref{twpr222} in \exref{abelian}).

\begin{example} \exlabel{unumic}
Let $V = (V, \cdot_V)$ be an $m$-dimensional $4$-algebra having $\{e_1, \cdots, e_m\}$ as a basis. Then any $(m+1)$-dimensional
$4$-algebra having the derived algebra $V$ is isomorphic to the $4$-algebra having $\{e_i, \, f_0 \, | \, i = 1, \cdots, m \}$ as a basis and
the multiplication given for any $i$, $j=1, \cdots, m$ by:
\begin{equation} \eqlabel{twpr222bb}
e_i \circ e_j := e_j \cdot_V e_j, \quad e_i \circ f_0 := \xi (e_i), \quad
f_0 \circ f_0 := F
\end{equation}
for some pair $(F, \xi) \in V \times {\rm End}_k (V)$ satisfying the following compatibility conditions
$$
F^2 = 0, \qquad x\cdot_V F + 2\, x^2 \cdot_V \xi (x)  + 2\, \xi(x)^2 + 2\, \xi (x) \cdot_V F = 0
$$
for all $x\in V$.
\end{example}

The semidirect product of $4$-algebras characterizes split epimorphisms
in the category on $4$-algebras, as we mentioned in \exref{abelian}:

\begin{corollary} \colabel{splialg}
An algebra map $\pi: B \to A$ between two $4$-algebras $A$ and $B$ is a split epimorphism in the
category of $4$-algebras if and only if there exists an isomorphism
of $4$-algebras $B \cong V\ltimes A$, where $V = {\rm Ker} (\pi)$
and $V \ltimes A$ is a semidirect product of $4$-algebras as constructed in \exref{abelian}.
\end{corollary}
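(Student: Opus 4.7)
The plan is to treat the two implications separately, the backward one being an immediate verification and the forward one following directly from \prref{hocechiv} applied to a multiplicative section.

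For the ``if'' direction, suppose $B \cong V \ltimes_{(\triangleright,\, \cdot_V)} A$ for some semidirect data as constructed in \exref{abelian}. Using the multiplication formula \equref{hoproduct2} with $f = 0$, namely $(x, a) \circ (y, b) = (x \cdot_V y + a \triangleright y + b \triangleright x, \, a \cdot b)$, the canonical projection $\pi_A : V \ltimes A \to A$, $(x,a) \mapsto a$, admits the section $s : A \to V \ltimes A$, $s(a) := (0,a)$. A direct check gives $s(a) \circ s(b) = (0, a\cdot b) = s(a \cdot b)$, so $s$ is a morphism of $4$-algebras; hence $\pi_A$ (and thus $\pi$ after transport along the isomorphism) is a split epimorphism.

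For the ``only if'' direction, assume $\pi : B \to A$ is a split epimorphism in the category of $4$-algebras, i.e.\ there exists a morphism of $4$-algebras $s : A \to B$ with $\pi \circ s = \Id_A$. Since $s$ is in particular a $k$-linear section of $\pi$, \prref{hocechiv} applies and yields an isomorphism of $4$-algebras $\varphi : V \#_{(\triangleright_s,\, f_s,\, \cdot_V)} A \to B$ that stabilizes $V$ and co-stabilizes $A$, where $(\triangleright_s, f_s, \cdot_V)$ are the bilinear maps defined by \equref{acti}--\equref{multip}. The key observation is now that $f_s(a,b) = s(a) \cdot_E s(b) - s(a \cdot b) = 0$ for all $a, b \in A$, precisely because $s$ is an algebra map. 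Thus the crossed system reduces to $(\triangleright_s, 0, \cdot_V)$, and the associated crossed product is by definition the semidirect product $V \ltimes_{(\triangleright_s,\, \cdot_V)} A$. The compatibility \equref{semidirect} is automatically satisfied, being a special case of (CS3) of \prref{crossprod}.

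There is no real obstacle here: the only non-trivial input is \prref{hocechiv}, and the whole content of the statement is the observation that a multiplicative (as opposed to merely linear) section forces the cocycle component of the crossed system to vanish, converting the crossed product into a semidirect product.
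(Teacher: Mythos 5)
Your proof is correct and takes essentially the same approach as the paper: the backward direction via the algebra section $s_A(a) = (0, a)$ of the canonical projection, and the forward direction by applying \prref{hocechiv} to the multiplicative section $s$ and observing that the cocycle $f_s$ of \equref{coci} vanishes, so the crossed product reduces to a semidirect product. No changes needed.
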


\begin{proof}
For a semidirect product $V\ltimes A$, the canonical projection $p_A : V \ltimes A \to A$, $p_{A}(x,
\, a) = a$ has a section that is an algebra map defined by $s_A
(a) = (0, \, a)$, for all $a\in A$. Conversely, let $s: A \to B$
be an algebra map with $\pi \circ s = {\rm Id}_A$. Then, the
bilinear map $f_s$ given by \equref{coci} from  \prref{hocechiv} is the trivial map and hence the corresponding
crossed product $V \# A$ reduces to a semidirect product $V \ltimes A$.
\end{proof}

Now we shall describe the morphisms that stabilize $V$ between two arbitrary crossed products $V \# A$ and $V \#' A$, for two reasons:
to compute the Galois group ${\rm Gal}(V \# A / A )$ of the extension $V \hookrightarrow V \# A$ and then to answer the classification
part of the GE problem for $4$-algebras.

\begin{lemma}\lelabel{HHH}
Let ${\mathcal C} \,  (A, V) = (\triangleright, \, f, \, \cdot_V)$
and ${\mathcal C}' \, (A, V) = (\triangleright ', \, f', \,
\cdot_V ')$ be two crossed systems of a $4$-algebra $A$ by $V$, with $V \# A$, respectively $V
\# ' A$, the corresponding crossed products. If $\cdot_V  = \cdot_V '$, then there exists a
bijection between the set of all $4$-algebra morphisms $\psi: V \#
A \to V \# ' A$ which stabilize $V$ and the set of all pairs $(r, \alpha) \in {\rm Hom}_k (A, \, V) \times {\rm End}_{\rm alg} (A)$,
where $r: A \to V$ is a linear map, $\alpha: A \to A$ is an algebra morphism satisfying the following compatibilities for all $a$, $b \in A$ and $x \in V$:
\begin{enumerate}
\item[(M1)] $a \triangleright x = \alpha (a) \triangleright ' x + r(a) \cdot_V ' x$;
\item[(M2)] $f(a, \,b)  = f '(\alpha(a), \,
\alpha(b)) + \alpha (a) \triangleright ' r(b) + \alpha(b) \triangleright ' r(a)  + r(a)
\cdot_V ' r(b) - r(a\cdot b) $
\end{enumerate}
Under the above bijection the $4$-algebra morphism $\psi =
\psi_{(r, \alpha)}: V \# A \to V \# ' A$ corresponding to the pair $(r, \alpha)$ is
given by $\psi(x, a) = (x + r(a), \, \alpha(a))$, for all $a \in A$, $x
\in V$. Furthermore, $\psi = \psi_{(r, \alpha)}$ is an isomorphism of $4$-algebras if and only if $\alpha$ is bijective and
$\psi = \psi_{(r, \alpha)}$ co-stabilize $A$ if and only if $\alpha = {\rm Id}_A$.
\end{lemma}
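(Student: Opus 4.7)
The plan is to translate the two conditions (stabilization of $V$ and being an algebra map) into explicit conditions on the components of $\psi$, then read off (M1), (M2) and the fact that $\alpha$ is an algebra morphism by separating variables.

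First I would show that any stabilizing $k$-linear map $\psi: V\# A \to V\#' A$ is necessarily of the form $\psi(x,a) = (x+r(a),\alpha(a))$ for a unique pair of linear maps $r: A\to V$ and $\alpha: A\to A$. Indeed, write $\psi(x,a) = (p(x,a), q(x,a))$ with $p$ valued in $V$ and $q$ in $A$. The stabilization condition $\psi \circ i_V = i_V$ forces $p(x,0)=x$ and $q(x,0)=0$, so by linearity $\psi(x,a) = \psi(x,0)+\psi(0,a) = (x+r(a), \alpha(a))$, where $r(a) := p(0,a)$ and $\alpha(a):= q(0,a)$. This establishes the bijection at the level of underlying sets between stabilizing linear maps and pairs $(r,\alpha)$; it remains to identify which pairs correspond to algebra morphisms.

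Next I would impose the multiplicativity $\psi\bigl((x,a)\circ (y,b)\bigl) = \psi(x,a)\circ'\psi(y,b)$ and expand both sides using \equref{hoproduct2}. The second (i.e.\ $A$-)component yields $\alpha(a\cdot b) = \alpha(a)\cdot\alpha(b)$, so $\alpha \in {\rm End}_{\rm alg}(A)$. Using $\cdot_V = \cdot_V'$, the first ($V$-)component becomes an identity in $x$, $y$, $a$, $b$; because the terms are bilinear in the pair of inputs and land in a direct sum of subspaces depending on which of $x,y$ are zero, I would separate it into pieces by successively setting $a=b=0$, then $x=0$, and finally $x=y=0$. The case $a=b=0$ gives the tautology $x\cdot_V y = x\cdot_V' y$ (trivially satisfied). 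Setting $x=0$ and using symmetry in the remaining arguments isolates the coefficient of $y$ and yields exactly the twisted-action identity (M1), after invoking commutativity of $\cdot_V'$ to rewrite $x\cdot_V' r(a)$ as $r(a)\cdot_V' x$. Setting $x=y=0$ leaves precisely the cocycle identity (M2). Conversely, if $\alpha$ is an algebra morphism and (M1), (M2) hold, then reassembling the four pieces shows $\psi_{(r,\alpha)}$ is multiplicative, so it is automatically a $4$-algebra morphism (commutativity is inherited).

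The remaining two statements are short. For the isomorphism claim: since $\psi_{(r,\alpha)}$ has block-triangular form $(x,a)\mapsto(x+r(a),\alpha(a))$ on $V\oplus A$, it is bijective iff $\alpha$ is, with explicit inverse $(y,b)\mapsto(y - r(\alpha^{-1}(b)),\alpha^{-1}(b))$; and co-stabilization of $A$ reads $\pi_A\circ \psi = \pi_A$, which is exactly $\alpha = \Id_A$. The main (only) obstacle I anticipate is purely bookkeeping: keeping the cross terms in the expansion of $(x+r(a))\cdot_V'(y+r(b)) + \alpha(a)\triangleright'(y+r(b)) + \alpha(b)\triangleright'(x+r(a)) + f'(\alpha(a),\alpha(b))$ organized so that the separation of variables produces (M1) and (M2) cleanly; since each relevant compatibility of \prref{crossprod} (the $4$-algebra identity on $V$ and axioms (CS2)--(CS3)) is automatically inherited by the target crossed product, no further structural input is required.
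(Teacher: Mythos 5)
Your proposal is correct and follows essentially the same route as the paper's proof: decompose any $V$-stabilizing linear map as $\psi(x,a)=(x+r(a),\,\alpha(a))$ for a unique pair $(r,\alpha)$, check multiplicativity case by case on the generators $(x,0)$ and $(0,a)$ (which yields that $\alpha$ is an algebra map together with (M1) and (M2)), and conclude with the same explicit inverse $\psi^{-1}(x,a)=\bigl(x-r(\alpha^{-1}(a)),\,\alpha^{-1}(a)\bigr)$ and the observation that co-stabilizing $A$ forces $\alpha={\rm Id}_A$. The only cosmetic difference is that the paper derives $\cdot_V=\cdot_V'$ as a necessary condition from the case $a=b=0$ rather than treating it as a standing hypothesis, which is immaterial to the statement as given.
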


\begin{proof} First of all we observe that for any linear map $\psi: V \,\# \, A \to V \,\# \, ' A$ that makes the following diagram commutative
$$
\xymatrix {& {V} \ar[r]^{i_{V}} \ar[d]_{Id_{V}}
& {V \,\# \, A}\ar[d]^{\psi}\\
& {V} \ar[r]^{i_{V}} & {V \,\# \, ' A}}
$$
there exists a unique pair of linear maps $(r, \alpha) \in {\rm Hom}_k (A, \, V) \times {\rm End}_k (A)$ such that $\psi(x, a) = (x + r(a), \, \alpha(a) )$, for all $a \in A$,
and $x \in V$. Let $\psi = \psi_{(r, \alpha)}$ be such a linear map,
i.e. $\psi(x, a) = (x + r(a), \, \alpha(a) )$, for some linear maps $r: A
\to V$, $\alpha: A \to A$. We will prove that $\psi$ is a morphism of
$4$-algebras if and only if $\cdot_V  = \cdot_V '$, $\alpha: A \to A$ is an algebra map and the compatibility conditions
(M1)-(M2) hold. To this end, it is enough to prove that the
compatibility
\begin{equation}\eqlabel{Liemap}
\psi \bigl((x, a) \circ (y, b) \bigl) = \psi(x, a) \circ' \psi(y,
b)
\end{equation}
holds on all generators of $V \,\# \, A$. We leave out the detailed computations and only indicate the main steps of this verification.
First, it is easy to see that \equref{Liemap} holds for the pair $(x, 0)$, $(y, 0)$ if and only if $\cdot_V  = \cdot_V '$. Secondly, we
can prove that \equref{Liemap} holds for the pair $(a, 0)$, $(0, b)$ if and only if $\alpha: A \to A$ is an algebra maps and (M2) hold. Finally, \equref{Liemap}
holds for the pair $(x, 0)$, $(0, a)$ if and only if (M1) holds. The last two statements are elementary: we just note that if $\alpha: A
\to A$ is bijective, then $\psi_{(r, v)}$ is an isomorphism of $4$-algebras with the inverse given for any $a \in A$ and $x
\in V$ by:
$$
\psi_{(r, \alpha)}^{-1}(x, \, a) = \bigl(x - r(\alpha^{-1}(a)), \,
\alpha^{-1}(a)\bigl). 
$$
This finishes the proof.
\end{proof}

It can be easily observed from the proof of \leref{HHH} that if $\cdot_V  \neq \cdot_V '$, then there are no $4$-algebra morphisms $\psi: V \#A \to V \# ' A$ which stabilize $V$. As a first application of \leref{HHH} we can describe the Galois group ${\rm Gal} \, (V \# A/ V )$ of the extension $V \hookrightarrow V \# A$, consisting of all automorphisms of the crossed product $4$-algebra $V \# A$ acting as identity on $V$. Let ${\mathcal C} \,  (A, V) = (\triangleright, \, f, \, \cdot_V)$ be a crossed system of a $4$-algebra $A$ by $V$. Let ${\mathcal G} \, _A ^V (\triangleright, \, f, \, \cdot_V)$ be the set of all pairs $(r, \alpha) \in {\rm Hom}_k (A, \, V) \times {\rm Aut}_{\rm Alg} (A)$ consisting of a linear map $r : A \to V$ and an automorphism $\alpha : A \to A$ of the $4$-algebra $(A, \cdot)$
such that:
\begin{eqnarray*}
a \triangleright x &=& \alpha (a) \triangleright  x + r(a) \cdot_V  x \\
f(a, \,b) &=& f (\alpha(a), \, \alpha(b)) + \alpha (a) \triangleright  r(b) + \alpha(b) \triangleright  r(a)  + r(a)
\cdot_V  r(b) - r(a\cdot b)
\end{eqnarray*}
for all $x\in V$, $a$, $b\in A$. Then, we can easily prove that ${\mathcal G} \, _A ^V (\triangleright, \, f, \, \cdot_V)$  has a group structure via the multiplication given by:
\begin{equation}\eqlabel{gal}
(r, \, \alpha) \bullet (r', \,  \alpha') := (r'+ r\circ \alpha' , \, \alpha\circ \alpha' )
\end{equation}
for all $(r, \, \alpha)$, $(r', \,  \alpha') \in {\mathcal G} \, _A ^V (\triangleright, \, f, \, \cdot_V)$ and, moreover, ${\mathcal G} \, _A ^V (\triangleright, \, f, \, \cdot_V)$ is a subgroup in the semidirect product of groups
${\rm Hom}_k (A, \, V) \ltimes {\rm GL}_k (A)$ as defined by \equref{semidirect0}.  Applying \leref{HHH} for
$(\triangleright ', \, f', \, \cdot_V ') : = (\triangleright , \, f, \, \cdot_V )$ we obtain:

\begin{corollary} \colabel{galgr}
Let ${\mathcal C} \,  (A, V) = (\triangleright, \, f, \, \cdot_V)$ be a crossed system of a $4$-algebra $A$ by $V$.
Then the map defined by:
$$
\vartheta : ({\mathcal G} \, _A ^V (\triangleright, \, f, \, \cdot_V), \, \bullet) \to {\rm Gal} \, (V \# A/ V ), \qquad \vartheta (r, \alpha) (x, \, a) :=
\bigl ( x + r(a), \,  \alpha(a) \bigl)
$$
for all $(r, \alpha) \in {\mathcal G} \, _A ^V (\triangleright, \, f, \, \cdot_V)$ and $(x, a) \in V\times A$ is an isomorphism of groups.
\end{corollary}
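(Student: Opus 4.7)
The plan is to derive this as an almost immediate corollary of \leref{HHH}. First, I specialize that lemma to the case $(\triangleright', f', \cdot_V') := (\triangleright, f, \cdot_V)$, in which the hypothesis $\cdot_V = \cdot_V'$ is automatically satisfied and the conditions (M1), (M2) become precisely the two identities displayed in the statement of the corollary. The lemma then yields a bijection between the set of $4$-algebra morphisms $\psi : V \# A \to V \# A$ stabilizing $V$ and the set of pairs $(r, \alpha) \in {\rm Hom}_k(A, V) \times {\rm End}_{\rm alg}(A)$ satisfying those two relations, the correspondence sending $(r, \alpha)$ to the map $\psi_{(r, \alpha)}(x, a) = (x + r(a),\, \alpha(a))$.

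Next, I invoke the last assertion of \leref{HHH}: $\psi_{(r, \alpha)}$ is an isomorphism if and only if $\alpha$ is bijective. Restricting the above correspondence to pairs with $\alpha \in {\rm Aut}_{\rm Alg}(A)$ therefore produces exactly the desired set-theoretic bijection $\vartheta : \mathcal{G}_A^V(\triangleright, f, \cdot_V) \to {\rm Gal}(V \# A / V)$, since by definition ${\rm Gal}(V \# A / V)$ is the group of $4$-algebra automorphisms of $V \# A$ that restrict to the identity on $V$.

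It only remains to check that $\vartheta$ intertwines the operation $\bullet$ defined in \equref{gal} with composition of automorphisms in ${\rm Gal}(V \# A / V)$. For any $(r, \alpha),\, (r', \alpha')$ in $\mathcal{G}_A^V(\triangleright, f, \cdot_V)$ and $(x, a) \in V \times A$, a one-line computation gives
$$
\bigl(\psi_{(r, \alpha)} \circ \psi_{(r', \alpha')}\bigr)(x, a) = \bigl(x + r'(a) + r(\alpha'(a)),\, \alpha(\alpha'(a))\bigr) = \psi_{(r' + r\circ \alpha',\, \alpha \circ \alpha')}(x, a),
$$
which is exactly $\vartheta\bigl((r, \alpha) \bullet (r', \alpha')\bigr)(x, a)$. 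As a by-product, this confirms the group assertion made just before the corollary: the associativity, identity, and inverses on $\mathcal{G}_A^V(\triangleright, f, \cdot_V)$ are inherited from ${\rm Gal}(V \# A / V)$ through the bijection $\vartheta$, and the embedding into ${\rm Hom}_k(A, V) \ltimes {\rm GL}_k(A)$ becomes a group homomorphism because \equref{gal} coincides with \equref{semidirect0}.

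Since \leref{HHH} already carries the technical weight (the translation of the morphism equation \equref{Liemap} into (M1)--(M2)), I do not anticipate any genuine obstacle; the only remaining tasks are the straightforward identification of the relevant subset of morphisms with ${\mathcal G}_A^V(\triangleright, f, \cdot_V)$ and the short composition check above.
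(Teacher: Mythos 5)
Your proposal is correct and follows exactly the paper's route: the corollary is obtained there by applying \leref{HHH} with $(\triangleright', f', \cdot_V') := (\triangleright, f, \cdot_V)$, restricting via the lemma's last assertion to bijective $\alpha$, and using the group law \equref{gal} on ${\mathcal G}\,_A^V(\triangleright, f, \cdot_V)$. Your explicit composition check $\psi_{(r,\alpha)} \circ \psi_{(r',\alpha')} = \psi_{(r' + r\circ \alpha',\, \alpha\circ\alpha')}$ is precisely the computation the paper leaves implicit when asserting the group structure just before the corollary, so nothing is missing.
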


Arising from \leref{HHH} we introduce the following concept needed for the classification part of the GE-problem :

\begin{definition}\delabel{echiaa}
Let $A$ be a  $4$-algebra and $V$ a vector space. Two crossed
systems ${\mathcal C} \,  (A, V) = (\triangleright, \, f, \, \cdot_V)$
and ${\mathcal C}' \, (A, V) = (\triangleright ', \, f', \, \cdot_V ')$ of $A$ by $V$ are called \emph{cohomologous}, and we denote this by
${\mathcal C} \, (A, V) \approx {\mathcal C}' \, (A, V)$, if and only if $\cdot_V =
\cdot_V '$ and there exists a linear map $r: A \to V$ such that
for any $a$, $b \in A$, $x \in V$ we have:
\begin{eqnarray}
a \triangleright x &=& a \triangleright ' x + r(a) \cdot_V ' x \eqlabel{compa2}\\
f(a,\, b) &=& f '(a, \, b) + a \triangleright '
r(b) + b \triangleright ' r(a) + r(a) \cdot_V ' r(b) - r(a\cdot b). 
\eqlabel{compa3}
\end{eqnarray}
\end{definition}

As a conclusion of the results obtained so far we obtain the theoretical answer to the GE-problem for $4$-algebras:

\begin{theorem}\thlabel{main1222}
Let $A$ be a $4$-algebra, $E$ a vector space and $\pi : E \to A$ a
linear epimorphism of vector spaces with $V = {\rm Ker} (\pi)$.
Then $\approx$ defined in \deref{echiaa} is an equivalence
relation on the set ${\mathcal C} {\mathcal S} (A, \, V)$ of all
crossed systems of $A$ by $V$. If we denote by ${\mathbb G}
{\mathbb H}^{2} \, (A, \, V) := {\mathcal C}\mathcal{S} (A, V)/
\approx $, then the map
\begin{equation} \eqlabel{thprinc}
{\mathbb G} {\mathbb H}^{2} \, (A, \, V) \to {\rm Gext} \, (A, \,
E), \qquad \overline{(\triangleright, \, f, \, \cdot_V)}
\, \longmapsto \, V \#_{(\triangleright, \, f, \, \cdot_V)} \, A
\end{equation}
is bijective, where $\overline{(\triangleright, \, f, \,
\cdot_V)}$ denotes the equivalence class of $(\triangleright, \,
f, \, \cdot_V)$ via $\approx$.
\end{theorem}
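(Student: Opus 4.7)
The bulk of the structural work has already been completed in \leref{HHH} and \prref{hocechiv}; what remains is essentially to translate the notion of cohomologous crossed systems (\deref{echiaa}) into the language of morphisms provided by \leref{HHH}, and then to read off the bijection.

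I would first fix a linear section $s_{0}: A \to E$ of $\pi$, obtaining a vector space isomorphism $\Phi: V \times A \to E$, $\Phi(x, a) := x + s_{0}(a)$. Transporting any crossed product $V \#_{(\triangleright, f, \cdot_V)} A$ along $\Phi$ produces a $4$-algebra structure on $E$ for which $\pi$ is an algebra map (cf. the exact sequence \equref{extenho1}); this is how the map of \equref{thprinc} is realized at the level of structures on $E$. The key observation is then that, specializing \leref{HHH} to the case $\alpha = {\rm Id}_{A}$, the conditions (M1)-(M2) coincide exactly with the cocycle-type identities \equref{compa2}-\equref{compa3}, and the resulting morphism $\psi_{(r, {\rm Id}_A)}: V \# A \to V \#' A$ is automatically an isomorphism that stabilizes $V$ and co-stabilizes $A$. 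Hence $(\triangleright, f, \cdot_V) \approx (\triangleright', f', \cdot_V')$ if and only if $\cdot_V = \cdot_V'$ and such an isomorphism exists.

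From this characterization, the proof that $\approx$ is an equivalence relation on $\mathcal{CS}(A,V)$ is immediate: reflexivity follows by taking $r = 0$, while the group law \equref{gal} applied on the subset of pairs of the form $(r, {\rm Id}_A)$ gives $\psi_{(r, {\rm Id}_A)}^{-1} = \psi_{(-r, {\rm Id}_A)}$ (symmetry) and $\psi_{(r_2, {\rm Id}_A)} \circ \psi_{(r_1, {\rm Id}_A)} = \psi_{(r_1 + r_2, {\rm Id}_A)}$ (transitivity). Well-definedness of the map in \equref{thprinc} then follows at once: if two crossed systems are cohomologous, the corresponding isomorphism conjugates through $\Phi$ to a stabilizing and co-stabilizing isomorphism between the induced $4$-algebra structures on $E$, so they determine the same class in ${\rm Gext}(A, E)$. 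Surjectivity I would read off directly from \prref{hocechiv}: any admissible $4$-algebra structure $\cdot_E$ on $E$ is cohomologous to one transported from some crossed product, hence lies in the image. For injectivity, if two crossed systems yield the same class in ${\rm Gext}(A, E)$, untransporting by $\Phi$ produces a $4$-algebra map $\varphi: V \# A \to V \#' A$ stabilizing $V$ and co-stabilizing $A$; by \leref{HHH}, combined with the observation immediately following it that a $V$-stabilizing morphism forces $\cdot_V = \cdot_V'$, one has $\varphi = \psi_{(r, \alpha)}$ for some $(r, \alpha)$, and the co-stabilization forces $\alpha = {\rm Id}_A$, reducing (M1)-(M2) to \equref{compa2}-\equref{compa3}.

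There is no substantive obstacle: all the algebraic content has been isolated in \leref{HHH}, so no new cocycle-type identities need to be verified. The only thing requiring care is the bookkeeping between structures on $E$ (used to define ${\rm Gext}(A, E)$) and structures on $V \times A$ (the natural ambient space of a crossed product), which is handled uniformly by the fixed section $s_0$ and the isomorphism $\Phi$; any choice of section leads to the same class modulo $\approx$ by the preceding transitivity argument.
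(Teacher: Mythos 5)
Your proposal is correct and takes essentially the same route as the paper: the paper's (very compressed) proof likewise uses the last statement of \leref{HHH} to identify $\approx$ with the existence of an isomorphism $V \# A \cong V \#' A$ that stabilizes $V$ and co-stabilizes $A$ (whence $\approx$ is an equivalence relation), and then invokes \prref{crossprod} and \prref{hocechiv} for the bijection. Your explicit bookkeeping with the fixed section $s_0$, the transport isomorphism $\Phi$, and the group-law verification $\psi_{(r_2, {\rm Id}_A)} \circ \psi_{(r_1, {\rm Id}_A)} = \psi_{(r_1 + r_2, {\rm Id}_A)}$ simply spells out details the paper leaves implicit.
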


\begin{proof} Using the last statement of \leref{HHH} we obtain that ${\mathcal C} \, (A, V) \approx {\mathcal C}' \, (A, V)$
if and only if there exists an isomorphism of $4$-algebras $V \# A \cong V \#' A$ that stabilize $V$ and co-stabilize $A$. Thus, $\approx$ is an equivalence
relation on the set ${\mathcal C} {\mathcal S} (A, \, V)$. The rest of the proof follows from \prref{crossprod} and \prref{hocechiv}.
\end{proof}

The classifying object ${\mathbb G} {\mathbb H}^{2} \, (A, \, V)$ constructed in \thref{main1222} will be called the \emph{global non-abelian cohomology} of $A$ by $V$ and its explicit computation for a given $4$-algebra $A$ and a vector space $V$ is a very difficult problem. The first step
in its calculation is inspired by the way the equivalence relation $\approx$ is expressed in
\deref{echiaa}: two different $4$-algebra structures $\cdot_V$ and $\cdot_V^{'}$ on
$V$ give rise to two different equivalence classes with respect to
the relation $\approx$ on ${\mathcal C} {\mathcal S} (A, V)$. Hence we can fix $\cdot_{V}$ a
$4$-algebra structure on $V$ and denote by ${\mathcal C}
{\mathcal S}_{\cdot_{V}} (A, \, V)$ the set of pairs
$\bigl(\triangleright, \, f \bigl)$ such that $\bigl(
\triangleright, \, f, \, \cdot_V \bigl) \in {\mathcal C}
{\mathcal S} (A, V)$. Two such pairs $\bigl(\triangleright, \,
f \bigl)$ and $\bigl(\triangleright', \, f' \bigl)
\in {\mathcal C} {\mathcal S}_{\cdot_{V}} (A, \, V)$ will be
called \emph{$\cdot_V$-cohomologous} and will be denoted by
$\bigl(\triangleright, \, f \bigl) \, \approx_{\cdot_V}
\bigl(\triangleright', \, f' \bigl)$ if there exists a linear map $r: A \to V$ such that
for any $a$, $b \in A$, $x \in V$ we have:
\begin{eqnarray}
a \triangleright x &=& a \triangleright ' x + r(a) \cdot_V  x \eqlabel{compa2aa}\\
f(a,\, b) &=& f '(a, \, b) + a \triangleright '
r(b) + b \triangleright ' r(a) + r(a) \cdot_V  r(b) - r(a\cdot b). 
\eqlabel{compa3ab}
\end{eqnarray}
Then $\approx_{\cdot_V}$ is an equivalence relation on
${\mathcal C} {\mathcal S}_{\cdot_{V}} (A, \, V)$ and we denote by
${\mathbb H}^{2}_{\rm nab} \, \bigl(A, \, (V, \, \cdot_{V} )\bigl)$ the
quotient set ${\mathcal C} {\mathcal S}_{\cdot_{V}} (A, \, V)/
\approx_{\cdot_V}$ and call it the \emph{non-abelian cohomology} of the $4$-algebras
$(A, \cdot)$ and $(V, \cdot_V)$. The object ${\mathbb H}^{2}_{\rm nab} \, \bigl(A, \, (V, \, \cdot_{V} )\bigl)$
classifies all extensions of the given $4$-algebra $A$ by the given
$4$-algebra $(V, \, \cdot_{V})$ and gives the theoretical answer to
the classical extension problem for $4$-algebras. We record all these observations in the
following result:

\begin{corollary} \colabel{desccompcon}
Let $A$ be a $4$-algebra and $V$ a vector space. Then
\begin{equation}\eqlabel{balsoi}
{\mathbb G} {\mathbb H}^{2} \, (A, \, V) = \, \amalg_{\cdot_{V}}
\, {\mathbb H}^{2}_{\rm nab} \, \bigl(A, \, (V, \, \cdot_{V} )\bigl)
\end{equation}
where the coproduct on the right hand side is in the category of
sets over all possible $4$-algebra structures $\cdot_{V}$ on the
vector space $V$. Furthermore, for a given $4$-algebra structure on $V$ the map
\begin{equation} \eqlabel{clasextprob}
{\mathbb H}^{2}_{\rm nab} \, \bigl(A, \, (V, \, \cdot_{V} )\bigl) \to {\rm
Ext} \, (A, \, (V, \, \cdot_V) ), \qquad
\overline{\overline{(\triangleright, \, f)}} \,
\longmapsto \, V \#_{(\triangleright, \, f, \,
\cdot_V)} \, A
\end{equation}
is bijective, where ${\rm Ext} \, (A, \, (V, \, \cdot_V) )$ is the
set of equivalence classes of all $4$-algebras that are extensions
of the $A$ by $(V, \cdot_V)$ and
$\overline{\overline{(\triangleright, \, f)}}$ denotes the
equivalence class of $(\triangleright, \, f)$ via
$\approx_{\cdot_V}$.
\end{corollary}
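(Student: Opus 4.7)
The plan is to deduce both statements as straightforward consequences of \thref{main1222}, by slicing the quotient $\mathcal{C}\mathcal{S}(A, V)/{\approx}$ according to the third component $\cdot_V$ of a crossed system.

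For the coproduct decomposition \equref{balsoi}, the crucial observation comes directly from \deref{echiaa}: the relation $\approx$ explicitly demands $\cdot_V = \cdot_V'$, so two crossed systems with distinct $4$-algebra structures on $V$ can never be cohomologous. Hence $\mathcal{C}\mathcal{S}(A, V)$ breaks up, in a manner compatible with $\approx$, as the disjoint union $\amalg_{\cdot_V}\, \mathcal{C}\mathcal{S}_{\cdot_V}(A, V)$ indexed by all $4$-algebra structures $\cdot_V$ on $V$. Each piece is $\approx$-saturated, and on it the restriction of $\approx$ coincides, by direct inspection of \equref{compa2}--\equref{compa3}, with the relation $\approx_{\cdot_V}$ of \equref{compa2aa}--\equref{compa3ab}. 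Passing to quotients yields \equref{balsoi}.

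For the bijection \equref{clasextprob}, fix a $4$-algebra structure $(V, \cdot_V)$ and restrict the bijection \equref{thprinc} to the $\cdot_V$-component of the coproduct just constructed. This produces a bijection between ${\mathbb H}^{2}_{\rm nab}(A, (V, \cdot_V))$ and the subset of ${\rm Gext}(A, E)$ consisting of classes of $4$-algebra structures $\cdot_E$ on $E$ for which $V = {\rm Ker}(\pi)$ inherits exactly the algebra $(V, \cdot_V)$. It remains to identify this subset with ${\rm Ext}(A, (V, \cdot_V))$. In one direction, the canonical extension \equref{extenho1} attached to $V \#_{(\triangleright, f, \cdot_V)} A$ has kernel $V \times \{0\}$ which visibly carries $\cdot_V$; for the other, \prref{hocechiv} presents any extension $0 \to (V, \cdot_V) \to E \to A \to 0$ as an isomorphic crossed product via an isomorphism that stabilizes $V$ and co-stabilizes $A$.

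The (mild) obstacle is to verify that the Schreier equivalence of extensions --- existence of $\varphi : E \to E'$ with $\varphi \circ i = i'$ and $\pi' \circ \varphi = \pi$ --- corresponds under these identifications to precisely $\approx_{\cdot_V}$. This is an immediate specialization of \leref{HHH}: the condition that $\varphi$ stabilizes $V$ puts it in the form $(x, a) \mapsto (x + r(a),\, \alpha(a))$, the co-stabilization $\pi' \circ \varphi = \pi$ forces $\alpha = \Id_A$, and the compatibilities (M1)--(M2) with $\alpha = \Id_A$ reduce to exactly \equref{compa2aa}--\equref{compa3ab}. Automatic invertibility of such a $\varphi$ is already recorded in the last assertion of \leref{HHH}, so cohomologous crossed products are indeed isomorphic as extensions, and \equref{clasextprob} is a well-defined bijection.
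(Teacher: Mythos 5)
Your proposal is correct and takes essentially the same route as the paper: the paper's justification for \coref{desccompcon} is precisely the discussion preceding it, which decomposes ${\mathcal C}{\mathcal S}(A,V)$ along the $4$-algebra structures $\cdot_V$ (possible because $\approx$ in \deref{echiaa} forces $\cdot_V=\cdot_V'$) and then specializes \thref{main1222}, \prref{hocechiv} and \leref{HHH} (with $\alpha={\rm Id}_A$). Your explicit verification that Schreier equivalence of extensions reduces under these identifications to $\approx_{\cdot_V}$ merely spells out what the paper leaves implicit.
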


We continue our investigation of the object ${\mathbb G} {\mathbb H}^{2} \, (A, \, V)$ observing that
among all components of the coproduct in \equref{balsoi}
the simplest one is that corresponding to the trivial $4$-algebra
structure on $V$, i.e. $x \cdot_V y := 0$, for all $x$, $y \in V$
which we denoted by $V_0$. Let
${\mathcal C} {\mathcal S}_0 (A, \, V_0)$ be the set of all pairs
$\bigl(\triangleright, \, f \bigl)$ such that $\bigl(
\triangleright, \, f, \, \cdot_V := 0 \bigl) \in {\mathcal
C} {\mathcal S} (A, V)$. As shown in (3) of \exref{abelian}, a pair
$\bigl(\triangleright, \, f \bigl) \in {\mathcal C} {\mathcal S}_0 (A, \, V_0)$ if and only if
$(V, \triangleright)$ is an $A$-module and $f: A \times A \to V$ is symmetric abelian $2$-cocycles, i.e.
\begin{equation} \eqlabel{celmais}
a^2 \triangleright (a \triangleright x) = 0, \qquad  f(a^2, a^2) + 2\, a^2 \triangleright f(a, a) = 0
\end{equation}
for all $a\in A$ and $x\in V$. Applying now \deref{echiaa} for the trivial multiplication $\cdot_V := 0$ we obtain that
two pairs $\bigl(\triangleright, \, f \bigl)$ and
$\bigl(\triangleright', \, f' \bigl) \in {\mathcal C} {\mathcal S}_0 (A, \, V_0)$ are \emph{$0$-cohomologous}
$\bigl(\triangleright, \, f \bigl) \approx_0
\bigl(\triangleright', \, f' \bigl)$ if and only if
$\triangleright = \triangleright'$ and there exists a linear map
$r: A \to V$ such that for all $a$, $b\in A$ we have:
\begin{equation}\eqlabel{cazslab}
f (a, \, b) = f '(a, \, b) + a \triangleright  r(b) + b \triangleright r(a) - r(a\cdot b). 
\end{equation}
The equality $\triangleright = \triangleright'$ shows that two
different $A$-module structures on $V$ give different
equivalence classes in the classifying object ${\mathbb H}_{\rm nab}^{2} \,
\bigl(A, \, V_0 \bigl)$. Thus, we can apply the same strategy as
before for computing  ${\mathbb H}_{\rm nab}^{2} \, \bigl(A, \, V_0 \bigl)$:
we fix $(V, \, \triangleright)$ a $A$-module structure on $V$ and consider the set ${\rm Z}^2_{\triangleright} \, (A, \, V_0) $
of all symmetric abelian $2$-cocycle, i.e. symmetric bilinear maps $f : A \times A \to V$ such that
\begin{eqnarray*} \eqlabel{cocycleb}
f(a^2, a^2) + 2\, a^2 \triangleright f(a, a) = 0
\end{eqnarray*}
for all $a$, $b\in A$.  Two symmetric abelian $2$-cocycles
$f$ and $f'$ are $0$-cohomologous $f \approx_0 f'$ if and only if there exists a
linear map $r: A \to V$ such that \equref{cazslab} holds. Then
$\approx_0$ is an equivalence relation on ${\rm Z}^2_{\triangleright} \, (A, \, V_0) $ and
the quotient set ${\rm Z}^2_{\triangleright} \, (A, \, V_0)/ \approx_0$, which we will
denote by ${\rm H}^2_{\triangleright} \, (A, \, V_0)$, plays the
role of the second cohomological group from the theory of groups or Lie algebras. All in all, we have obtained the following results which
classifies all extensions of a $4$-algebra $A$ by an abelian
algebra $V_0$.

\begin{corollary}\colabel{cazuabspargere}
Let $A$ be a $4$-algebra and $V$ a vector space viewed with the trivial $4$-algebra structure $V_0$. Then:
\begin{equation}\eqlabel{balsoi2}
{\mathbb H}_{\rm nab}^{2} \, \bigl(A, \, V_0 \bigl) \, = \, \amalg_{\triangleright} \, {\rm H}^2_{\triangleright} \, (A, \,
V_0)
\end{equation}
where the coproduct on the right hand side is in the category of sets over all possible $A$-module structures $\triangleright$ on the vector space $V$.
\end{corollary}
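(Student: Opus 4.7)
The plan is to observe that the corollary is essentially a bookkeeping statement about how the set ${\mathcal C}{\mathcal S}_0(A,\,V_0)$ fibers over the set of $A$-module structures on $V$, and that the equivalence relation $\approx_0$ respects this fibration. Once this is set up, the coproduct decomposition falls out of the universal property of coproducts in $\dul{\rm Set}$.

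First I would fix the ambient description. By item (3) of \exref{abelian} (the special case $\cdot_V = 0$ of \prref{crossprod}), a pair $(\triangleright,\,f)$ lies in ${\mathcal C}{\mathcal S}_0(A,\,V_0)$ precisely when $(V,\triangleright)$ is an $A$-module in the sense of \deref{module} and $f:A\times A\to V$ is a symmetric abelian $2$-cocycle satisfying $f(a^2,a^2)+2\,a^2\triangleright f(a,a)=0$. Therefore there is a canonical set-theoretic decomposition
\begin{equation*}
{\mathcal C}{\mathcal S}_0(A,\,V_0)\;=\;\amalg_{\triangleright}\,{\rm Z}^{2}_{\triangleright}(A,\,V_0),
\end{equation*}
where the coproduct runs over all $A$-module structures $\triangleright$ on $V$.

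Second, I would show that $\approx_0$ is compatible with this partition. By \deref{echiaa} specialized to $\cdot_V=\cdot_V'=0$, the cocycle condition \equref{compa2} collapses to $a\triangleright x = a\triangleright' x$ (because the terms $r(a)\cdot_V x$ and $r(a)\cdot_V' x$ both vanish), as already noted in \equref{cazslab}. Thus $(\triangleright,f)\approx_0(\triangleright',f')$ forces $\triangleright=\triangleright'$, so two elements in different fibers of the decomposition above are never equivalent. Inside a single fiber, relation \equref{compa3} reduces to the abelian coboundary relation $f(a,b)=f'(a,b)+a\triangleright r(b)+b\triangleright r(a)-r(a\cdot b)$, which is exactly the relation $\approx_0$ used to form ${\rm H}^{2}_{\triangleright}(A,\,V_0)$.

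Third, I would conclude by taking quotients: since $\approx_0$ preserves each fiber and within each fiber coincides with the relation defining ${\rm H}^2_{\triangleright}(A,\,V_0)$, the quotient of a disjoint union is the disjoint union of quotients, yielding \equref{balsoi2}. There is no real obstacle here; the only point to check carefully is the collapse of \equref{compa2} in the trivial-multiplication case, which pins $\triangleright$ as an invariant of the $\approx_0$-class and is what makes the coproduct (rather than a more entangled quotient) appear on the right-hand side.
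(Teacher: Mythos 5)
Your proposal is correct and takes essentially the same route as the paper, whose own proof is the paragraph preceding \coref{cazuabspargere}: there too the decomposition is obtained by identifying ${\mathcal C}{\mathcal S}_0(A,\,V_0)$ with pairs consisting of an $A$-module structure and a symmetric abelian $2$-cocycle via (3) of \exref{abelian}, observing that \deref{echiaa} specialized to $\cdot_V := 0$ forces $\triangleright = \triangleright'$ so that $\approx_0$ preserves the fibers, and reducing the relation within each fiber to the coboundary relation \equref{cazslab}, whence the quotient of the disjoint union is the disjoint union of the quotients ${\rm H}^2_{\triangleright}(A,\,V_0)$. One cosmetic remark: \equref{compa2} contains the single correction term $r(a)\cdot_V' x$ rather than two, but since $\cdot_V = \cdot_V' = 0$ here, your conclusion is unaffected.
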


\section{Applications and examples} \selabel{exappl}

The computation of the classifying object ${\mathbb G} {\mathbb H}^{2} \, (A, \, V)$ as constructed in
\thref{main1222} and \coref{desccompcon} is a very challenging problem. In the following we shall
compute ${\mathbb G} {\mathbb H}^{2} \, (A, \, k)$ and ${\mathbb G} {\mathbb H}^{2} \, (k, \, V)$. First we need the following:

\begin{proposition}\prlabel{cohocflag}
Let $A$ be a $4$-algebra. Then there exists a bijection between the set ${\mathcal C}{\mathcal S} \, (A, \, k)$  of all
crossed systems of $A$ by $k$ and the set ${\mathcal C} {\mathcal F} \, (A)$ consisting of all pairs $(\lambda, \, f)$, where
$\lambda :A \to k$ is linear map, $f: A\times A \to k$ a symmetric bilinear form on $A$ satisfying the following compatibilities conditions for any $a\in A$:
\begin{equation}\eqlabel{coflag11}
f(a^2, \, a^2) + 2\, \lambda(a^2) f(a, \, a) = 0, \qquad \lambda(a) \lambda(a^2) = 0. 
\end{equation}
The correspondence is given such that the crossed crossed product
$k \# A$ associated to the pair $(\lambda, \, f)\in {\mathcal C} {\mathcal F} \, (A) $ is the $4$-algebra denoted by $A_{(\lambda,
f)} := k \times A$ with the multiplication given for any $a$, $b \in A$ and
$x$, $y\in k$ by:
\begin{equation} \eqlabel{patratnul}
(x, a) \circ (y, b) = \bigl( \lambda (a) y + \lambda (b) x + f (a, b), \, a\cdot b  \bigl). 
\end{equation}
Furthermore, a $4$-algebra $B$ has a surjective algebra map $B \to A\to 0$ whose kernel is $1$-dimensional if and
only if $B$ is isomorphic to $A_{(\lambda, \, f)}$, for some
$(\lambda, f) \in {\mathcal C} {\mathcal F} \, (A)$.
\end{proposition}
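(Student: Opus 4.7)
The plan is to specialize \prref{crossprod} to the case $V = k$ and then read off the two compatibility conditions. I proceed in three steps.

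First, I would identify the possible $4$-algebra structures $\cdot_V$ on the $1$-dimensional vector space $k$. Any commutative bilinear map on $k$ is of the form $x \cdot_V y = c\, xy$ for some fixed $c \in k$. The $4$-algebra axiom $(x^2)^2 = 0$ then forces $c^3 x^4 = 0$ for all $x$, hence $c = 0$. So the only $4$-algebra structure on $k$ is the trivial one. Next, a bilinear map $\triangleright : A \times k \to k$ is determined by the linear form $\lambda : A \to k$, $\lambda(a) := a \triangleright 1$, via $a \triangleright x = \lambda(a)\, x$; and $f : A \times A \to k$ is kept as is. This sets up a bijection between crossed data of $A$ by $k$ and pairs $(\lambda, f)$ with $\lambda$ linear, $f$ bilinear.

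Second, I would translate the axioms (CS1)--(CS3) of \prref{crossprod} into conditions on $(\lambda, f)$. Axiom (CS1) reduces to the requirement that $f$ be symmetric (the $4$-algebra condition on $\cdot_V = 0$ is automatic). Axiom (CS2), using $\cdot_V = 0$ and $a^2 \triangleright f(a,a) = \lambda(a^2) f(a,a)$, becomes exactly
\begin{equation*}
f(a^2, a^2) + 2\,\lambda(a^2)\, f(a,a) = 0.
\end{equation*}
In axiom (CS3), every term involving $\cdot_V$ vanishes, as does $x^2 = 0$ and $(a \triangleright x)^2 = 0$; what survives is $2\, a^2 \triangleright (a \triangleright x) = 2\,\lambda(a^2)\lambda(a)\, x$, and since $\mathrm{char}\, k \neq 2$ this yields $\lambda(a)\lambda(a^2) = 0$ for all $a \in A$. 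This gives the second condition in \equref{coflag11}.

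Third, substituting $\cdot_V = 0$ and $a \triangleright x = \lambda(a) x$ into the crossed product formula \equref{hoproduct2} yields the multiplication \equref{patratnul}, which proves that the associated crossed product is precisely $A_{(\lambda, f)}$. For the final assertion, if $B$ is a $4$-algebra with a surjective algebra map $\pi : B \to A$ whose kernel $V$ is $1$-dimensional, then \prref{hocechiv} provides an isomorphism $B \cong V \# A$ for some crossed system; transporting this isomorphism along a choice of basis of $V$ identifies $B$ with $A_{(\lambda, f)}$ for some $(\lambda, f) \in \mathcal{C}\mathcal{F}(A)$. No step here is a genuine obstacle: the only mildly subtle point is checking that the $4$-algebra structure on $k$ is forced to be trivial, after which everything reduces to a direct substitution into the axioms of \prref{crossprod}.
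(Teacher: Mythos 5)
Your proof is correct and follows essentially the same route as the paper's: observe that the only $4$-algebra structure on $k$ is the trivial one, encode the weak action by the linear form $\lambda(a) := a \triangleright 1$, specialize axioms (CS1)--(CS3) of \prref{crossprod} to obtain exactly the two conditions of \equref{coflag11}, and deduce the final statement from \prref{hocechiv}. The only difference is that you verify explicitly (via $c^3 = 0$, hence $c = 0$) what the paper merely asserts about the uniqueness of the $4$-algebra structure on $k$, which is a detail filled in rather than a change of approach.
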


\begin{proof}
Since $k$ has dimension $1$, any $4$-algebra structure $\cdot_k$ on $k$ is the abelian one, $x\cdot_k y = 0$, for all
$x$, $y\in k$. Hence, there exists a bijection between the set of all crossed datums
$\bigl( \triangleright, \, f, \, \cdot_k \bigl)$ of $A$ by
$k$ and the set of pairs $(\lambda, \, f)$ consisting of a linear map $\lambda: A \to k$ and a bilinear map $f: A
\times A \to k$. The bijection is given such that $(\triangleright, \, f)$ corresponding to
$(\lambda, \, f)$ is defined by $a\triangleright x :=
\lambda (a) x$ for all $a \in A$ and $x \in k$. Now the axiom (CS1) of \prref{crossprod}
is equivalennt to $f$ being symmetric, while the axioms (CS2) and (CS3) are equivalent to \equref{coflag11}. The
algebra $A_{(\lambda, \, f)}$ is just the crossed product $k \# A$ associated to this context and the last statement follows from
\prref{hocechiv}.
\end{proof}

Let $(\lambda, f) \in {\mathcal C} {\mathcal F} \, (A)$ be a pair as in \prref{cohocflag}.
We shall explicitly describe the multiplication of the $4$-algebra $A_{(\lambda, \, f)}$.
We will see the elements of $A$ as elements in $k \times A$ through the identification $a = (0, \,
a)$ and denote by $g := (1, \, 0_A) \in k\times A$. Let $\{ e_i \, | \, i \in I \}$ be a basis of $A$ as a vector space
over $k$. Then, the $4$-algebra $A_{(\lambda, f)}$ is the vector space having  $\{ g,
\, e_i \, | \, i \in I \}$ as a basis and the multiplication $\circ $ is given for any $i$, $j\in I$
by:
\begin{equation}\eqlabel{primaalg}
g^2 = 0, \quad  e_i \circ g = g \circ e_i =
\lambda(e_i) \, g, \quad e_i \circ e_j = e_i \cdot e_j + f(e_i, \, e_j) \, g.
\end{equation}

In order to compute ${\mathbb G} {\mathbb H}^{2} \, (A, \, k)$,
we observe first that the equivalence relation given by \equref{compa2} and \equref{compa3} from \deref{echiaa}, written for the set
${\mathcal C} {\mathcal F} \, (A)$, via the bijection ${\mathcal C}{\mathcal S} \, (A, \, k) \cong {\mathcal C} {\mathcal F} \, (A)$
proven in \prref{cohocflag} takes the following form: $(\lambda, f) \approx (\lambda', f')$ if and only if $\lambda = \lambda'$ and there exist
a linear map $r: A\to k$ such that
\begin{equation}\eqlabel{flagrel}
f(a, \, b) = f '(a, \, b) + \lambda ' (a)  r(b) + \lambda '(b) r(a) - r(a\cdot b)
\end{equation}
for all $a$, $b\in A$. Thus, we obtain that ${\mathbb G} {\mathbb H}^{2} \, (A, \, k) \cong {\mathcal C} {\mathcal F} \, (A)/ \approx$.
We continue our investigation since the equality $\lambda = \lambda'$ in the above equivalence relation shows that two different $\lambda$ and $\lambda'$
give different equivalence classes in the classifying object ${\mathcal C} {\mathcal F} \, (A)/ \approx$. Hence, we can fix a linear map
$\lambda: A\to k$ such that $\lambda(a) \lambda(a^2) = 0$, for all $a\in A$. We denote by  ${\rm Z}^2_{\lambda} \, (A, \, k)$ the set of all
\emph{$\lambda$-cocycles}: that is, the set of all symmetric bilinear
maps $f : A \times A \to k$ satisfying the first compatibility condition of \equref{coflag11}. Two $\lambda$-cocycles $f$,
$f' : A \times A \to k$ are equivalent $f
\approx^{\lambda} f'$ if and only if there exists a linear
map $r: A \to k$ such that
\begin{equation*}\eqlabel{flagrelb}
f(a, \, b) = f '(a, \, b) + \lambda  (a)  r(b) + \lambda (b) r(a) - r(a\cdot b)
\end{equation*}
for all $a$, $b\in A$. We denote ${\rm H}^2_{\lambda} \, (A, \, k) := {\rm Z}^2_{\lambda} \, (A, \,
k)/ \approx^{\lambda}$ and we record all the above results in the following decomposition of ${\mathbb G} {\mathbb H}^{2} \, (A, \, k)$, which is a special case of
\coref{cazuabspargere} applied for $V : = k$:

\begin{corollary} \colabel{hoccal1ab}
Let $A$ be a $4$-algebra. Then,
\begin{equation} \eqlabel{astaedefol}
{\mathbb G} {\mathbb H}^{2} \, (A, \, k) \, \cong \, {\mathcal C} {\mathcal F} \, (A)/ \approx \,
\, \cong \, \amalg_{\lambda} \,\, {\rm H}^2_{\lambda} \, (A, \, k)
\end{equation}
where the coproduct on the right hand side is in the category of
sets over all possible linear maps $\lambda: A\to k$ satisfying
$\lambda(a) \lambda(a^2) = 0$, for all $a\in A$.
\end{corollary}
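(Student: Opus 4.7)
The plan is to transport everything from crossed systems living on $A\times V$ (with $V=k$) to the set $\mathcal{CF}(A)$ via \prref{cohocflag} and then decompose the quotient by partitioning on the linear part $\lambda$.

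First, I would invoke \prref{cohocflag}, which already supplies a bijection $\mathcal{CS}(A,k) \cong \mathcal{CF}(A)$: to a crossed system $(\triangleright,f,\cdot_k)$ one associates the pair $(\lambda,f)$ where $\lambda(a):= a\triangleright 1_k$ (the multiplication $\cdot_k$ being automatically trivial since $\dim k=1$). Under this bijection I would transport the equivalence relation $\approx$ of \deref{echiaa}: because $\cdot_V=\cdot_V'=0$ is forced, \equref{compa2} specialises to the equality $\lambda(a)x = \lambda'(a)x$ for all $x\in k$, that is $\lambda=\lambda'$, and \equref{compa3} specialises to the relation \equref{flagrel}. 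This immediately gives the first isomorphism $\mathbb{G}\mathbb{H}^2(A,k) \cong \mathcal{CF}(A)/\approx$, which is the first half of \equref{astaedefol}.

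Next, for the second bijection, I would exploit exactly the observation highlighted just before the statement: the transported equivalence relation forces $\lambda=\lambda'$. Consequently the quotient $\mathcal{CF}(A)/\approx$ decomposes as a disjoint union, indexed by those linear maps $\lambda:A\to k$ admissible by \equref{coflag11} (i.e. $\lambda(a)\lambda(a^2)=0$ for all $a\in A$), of the quotient $\mathcal{CF}_\lambda(A)/\approx$, where $\mathcal{CF}_\lambda(A)$ consists of pairs with the given fixed $\lambda$. A pair $(\lambda,f)\in \mathcal{CF}(A)$ with $\lambda$ fixed amounts precisely to a symmetric bilinear form $f:A\times A\to k$ satisfying the first compatibility of \equref{coflag11}, i.e. an element of $\mathrm{Z}^2_\lambda(A,k)$, and with $\lambda=\lambda'$ fixed the relation \equref{flagrel} becomes exactly the $\approx^{\lambda}$ relation defining $\mathrm{H}^2_\lambda(A,k)$. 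Therefore each component equals $\mathrm{H}^2_\lambda(A,k)$, and assembling gives $\mathcal{CF}(A)/\approx \, \cong \amalg_\lambda \mathrm{H}^2_\lambda(A,k)$.

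Finally, I would remark that this is precisely the specialisation of \coref{cazuabspargere} to $V:=k$: the only $4$-algebra structure on $k$ is the trivial one (so the outer coproduct in \equref{balsoi} collapses to a single term), and the $A$-module structures on $k$ are in bijection with the admissible linear maps $\lambda:A\to k$ via $a\triangleright x = \lambda(a)x$ (condition \equref{actiunest} becomes $\lambda(a^2)\lambda(a)x=0$, which is the second part of \equref{coflag11}). No real obstacle arises here: the only points requiring a line of verification are that the bijection of \prref{cohocflag} intertwines the two equivalence relations, and that the admissibility condition on $\lambda$ matches under the correspondence; both are direct translations. The statement is therefore essentially a bookkeeping corollary of \prref{cohocflag} and \coref{cazuabspargere}.
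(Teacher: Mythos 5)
Your proposal is correct and follows essentially the same route as the paper: transporting the equivalence relation of \deref{echiaa} through the bijection $\mathcal{CS}(A,k)\cong\mathcal{CF}(A)$ of \prref{cohocflag} (where $\cdot_k$ is forced to be trivial, so \equref{compa2} forces $\lambda=\lambda'$ and \equref{compa3} becomes \equref{flagrel}), then partitioning $\mathcal{CF}(A)/\approx$ over the admissible $\lambda$ to obtain the components ${\rm H}^2_{\lambda}(A,k)$. Your closing observation that the result is the specialisation of \coref{cazuabspargere} to $V:=k$, with $A$-module structures on $k$ matching the maps $\lambda$ satisfying $\lambda(a)\lambda(a^2)=0$, is exactly the remark the paper itself makes when recording this corollary.
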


\begin{example} \exlabel{calcexp}
Let $n$ be a positive integer and $A$ the $4$-algebra having $\{e_1, \cdots, e_{n+1} \}$ as a basis and the
multiplication given by $e_1 \cdot e_2 := e_{n+1}$ and the other products of basis elements are zero. Then we can prove that
$$
{\mathcal C} {\mathcal F} \, (A) \cong \, k^n \times {\rm Sym}_{n+1}^{0} \, (k)
$$
where we denoted by ${\rm Sym}_{n+1}^{0} \, (k)$ the vector space of all $(n+1)\times (n+1)$-symmetric matrices
$(f_{ij})$ such that $f_{n+1,n+1} := 0$. The bijection is given such that $(\lambda, \, f) \in {\mathcal C} {\mathcal F} \, (A)$ associated to a pair
$\bigl( \lambda_i \bigl) \times \, (f_{ij}) \in k^n \times {\rm Sym}_{n+1} ^{0} \, (k)$
is given by:
$$
\lambda (e_t) := \lambda_t, \quad \lambda (e_{n+1}) := 0, \qquad f(e_i, \, e_j) := f_{ij}
$$
for all $t = 1, \cdots, n$ and $i$, $j = 1, \cdots, n+1$. In particular, we obtain that
\begin{equation*} \eqlabel{astaedefolbb}
{\mathbb G} {\mathbb H}^{2} \, (A, \, k) \, \cong \, \amalg_{\lambda} \,\, {\rm Sym}_{n+1} ^{0} \, (k) / \approx_{\lambda}
\end{equation*}
where the coproduct on the right hand side is taken over all $\lambda = (\lambda_1, \cdots, \lambda_n) \in k^n$ and
${\rm Sym}_{n+1} ^{0} \, (k) / \approx_{\lambda}$ is the quotient set of ${\rm Sym}_{n+1} ^{0} \, (k)$ via the following equivalence relation:
$(f_{ij}) \approx_{\lambda} (f_{ij}')$ if and only if there exists a linear map $r: A \to k$ such that
$$
f_{ij} = f_{ij}' + \lambda_i r (e_j) + \lambda_j r (e_i) - r(e_i \cdot e_j)
$$
for all $i$, $j = 1, \cdots, n+1$ (with $\lambda_{n+1} := 0$).
\end{example}

Now we describe the opposite case, namely ${\mathbb G} {\mathbb H}^{2} \, (k, \, V)$.

\begin{proposition}\prlabel{cohocflagbb}
Let $V$ be a vector space. Then there exists a bijection between the set ${\mathcal C}{\mathcal S} \, (k, \, V)$ of all
crossed systems of $k$ by $V$ and the set ${\mathcal C} {\mathcal T} \, (V)$ consisting of all triples $(\theta, \, F, \, \cdot_V)$, where
$\theta : V \to V$ is linear map, $F \in V $ and $\cdot_V : V \times V$ is a $4$-algebra structure on $V$ satisfying the following compatibility conditions for any $x\in V$:
\begin{equation}\eqlabel{trif}
F^2 = 0, \quad \theta (x) \cdot_V \, x^2  = \theta (x) \cdot_V \, F = 0, \quad 2\, \theta (x)^2 + x^2 \cdot_V \, F = 0. 
\end{equation}
The correspondence is given such that the crossed product
$V \# k$ associated to the triple $(\theta, \, F, \, \cdot_V) \in {\mathcal C} {\mathcal T} \, (V) $ is the $4$-algebra denoted by $V_{(\theta, \, F, \, \cdot_V)} := V \times k$ with the multiplication given for any $a$, $b \in k$ and $x$, $y\in V$ by:
\begin{equation} \eqlabel{patratnul}
(x, a) \circ (y, b) = \bigl(x \cdot_V y + a \theta (y) + b \theta(x) + ab F , \, 0 \bigl). 
\end{equation}
\end{proposition}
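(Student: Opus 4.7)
The plan is to parallel the proof of \prref{cohocflag}, now using that $k$ (rather than $V$) is one-dimensional. Since $\dim_k k = 1$, every $4$-algebra structure on $k$ is automatically the abelian one ($ab = 0$ for all $a, b \in k$), so this datum is forced and need not be prescribed. Bilinearity then reduces $\triangleright : k \times V \to V$ to the linear map $\theta : V \to V$, $\theta(x) := 1 \triangleright x$ (so $a \triangleright x = a\, \theta(x)$), and reduces $f : k \times k \to V$ to $F := f(1,1) \in V$ (so $f(a, b) = ab\, F$, automatically symmetric). This yields a bijection between the crossed data of $k$ by $V$ and the triples $(\theta, F, \cdot_V)$, and it remains only to check that the axioms (CS1)--(CS3) of \prref{crossprod} correspond to the conditions \equref{trif}.

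Axiom (CS1) is equivalent to $(V, \cdot_V)$ being a $4$-algebra and to $f$ being symmetric; both are built into the triple. Since $a^2 = 0$ in $k$, axiom (CS2) collapses to $f(a, a) \cdot_V f(a, a) = 0$, i.e.\ $a^4 (F \cdot_V F) = 0$ for all $a$, which is equivalent to $F^2 = 0$. For axiom (CS3), substituting $a^2 = 0$, $a \triangleright x = a\, \theta(x)$, and $f(a, a) = a^2 F$ collapses four of the six terms and reshuffles the rest, so that (CS3) becomes
\begin{equation*}
2a\, (x^2 \cdot_V \theta(x)) + a^2 \bigl( x^2 \cdot_V F + 2\theta(x)^2 \bigr) + 2a^3\, (\theta(x) \cdot_V F) = 0,
\end{equation*}
required to hold for every $a \in k$ and every $x \in V$.

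The final step is to extract the three conditions from this polynomial identity in $a$. Since $\Char k \neq 2$ and the identity must hold for all $a \in k$, a standard polarization argument (evaluating at several distinct scalars and inverting the resulting Vandermonde-type system) forces the three coefficients to vanish separately, yielding exactly $\theta(x) \cdot_V x^2 = 0$, $\theta(x) \cdot_V F = 0$, and $2\theta(x)^2 + x^2 \cdot_V F = 0$; the converse direction is immediate by reading the argument backwards. Finally, the explicit multiplication formula \equref{patratnul} is obtained by substituting the parameterizations $\triangleright \leftrightarrow \theta$, $f \leftrightarrow F$ into the crossed product formula \equref{hoproduct2}. The only genuinely nontrivial step is the coefficient separation in (CS3); everything else is routine bookkeeping driven by $\dim_k k = 1$.
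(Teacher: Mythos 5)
Your reduction is exactly the route the paper intends: its own ``proof'' of this proposition is a one-line deferral to \prref{cohocflag}, using that the only $4$-algebra structure on $k$ is the abelian one, and your bookkeeping is the correct completion of that deferral. In particular, the algebra square $a\cdot a=0$ in $k$ kills the terms $f(a^2,a^2)$, $a^2\triangleright f(a,a)$, $a^2\triangleright x^2$ and $a^2\triangleright(a\triangleright x)$, (CS2) correctly collapses to $F^2=0$, and your displayed identity
$2a\,(x^2 \cdot_V \theta(x)) + a^2 ( x^2 \cdot_V F + 2\theta(x)^2 ) + 2a^3\, (\theta(x) \cdot_V F) = 0$
is the right residue of (CS3). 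You are also right that the coefficient separation is the one genuinely new step: in \prref{cohocflag} no such step occurs (there the kernel $k$ is abelian and (CS2)--(CS3) collapse termwise), so the analogy with \prref{cohocflag} silently hides this point.

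There is, however, a genuine gap in that step as you state it. Inverting the Vandermonde system on the coefficients of $a, a^2, a^3$ requires three distinct \emph{nonzero} scalars, i.e.\ $|k|\geq 4$; the paper's standing hypothesis is only $\Char k \neq 2$, which admits $k=\FF_3$. Over $\FF_3$ one has $a^3=a$, so the identity only yields $c_2=0$ and $c_1+c_3=0$, and rescaling $x$ does not separate $c_1$ (cubic in $x$) from $c_3$ (linear in $x$) either, since $t^3=t$. This is not a removable technicality: the proposition itself fails over $\FF_3$. Take $V$ with basis $\{x_0, u, F, w\}$, multiplication $x_0^2 := 2F$, $u \cdot_V F := w$ (all other products of basis vectors zero), and $\theta(x_0):=u$, $\theta(u)=\theta(F)=\theta(w):=0$. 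Writing $x = \alpha x_0 + \beta u + \gamma F + \delta w$ one gets $x^2 = 2\alpha^2 F + 2\beta\gamma w$, $\theta(x)=\alpha u$, hence $(x^2)^2=0$, $F^2=0$, $x^2\cdot_V F + 2\theta(x)^2 = 0$, and
$2a\,(x^2\cdot_V\theta(x)) + 2a^3\,(\theta(x)\cdot_V F) = a\alpha(\alpha^2+2a^2)\,w = 0$
for all $a \in \FF_3$ (squares of units in $\FF_3$ equal $1$), so $(\triangleright, f, \cdot_V)$ satisfies (CS1)--(CS3) of \prref{crossprod} and is a genuine crossed system; yet $\theta(x_0)\cdot_V F = w \neq 0$, violating \equref{trif}. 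So the map ${\mathcal C}{\mathcal T}(V) \to {\mathcal C}{\mathcal S}(k,V)$ is not surjective over $\FF_3$. Your argument (and the statement) is correct precisely when $|k|\geq 4$, e.g.\ under $\Char k \neq 2,3$; this defect is shared with the paper rather than introduced by you, but in your write-up the hypothesis must be stated where the Vandermonde system is invoked.
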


\begin{proof}
We leave it to the reader since it is similar to the one of \prref{cohocflag}, taking into account that the only $4$-algebra structure on $A := k$ is the abelian one.
\end{proof}

We fix now $\cdot_V$ a $4$-algebra structure on the vector space $V$ and denote by ${\mathcal T}_{\cdot_V} \, (V)$ the set of all pairs
$(\theta, \, F) \in {\rm End}_k (V) \times V$ satisfying the compatibility conditions \equref{trif}. Two pairs $(\theta, \, F)$ and $(\theta', \, F') \in {\mathcal T}_{\cdot_V} \, (V)$ are $\cdot_V$-cohomologous $(\theta, \, F) \approx_{\cdot_V} (\theta', \, F')$ if and only if there exists an element $r \in V$ such that
$$
\theta (x) = \theta' (x) + r \cdot_V x, \qquad F = F' + 2 \, \theta' (r) + r^2
$$
for all $x\in V$. Applying \coref{desccompcon} we obtain:

\begin{corollary} \colabel{desccompcon2b}
Let $V$ be a vector space. Then
\begin{equation}\eqlabel{balsoib}
{\mathbb G} {\mathbb H}^{2} \, (k, \, V) = \, \amalg_{\cdot_{V}} \, {\mathcal T}_{\cdot_V} \, (V) / \approx_{\cdot_V}
\end{equation}
where the coproduct on the right hand side is in the category of sets over all possible $4$-algebra structures $\cdot_{V}$ on $V$.
\end{corollary}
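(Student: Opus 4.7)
The plan is to combine \prref{cohocflagbb} with \coref{desccompcon} applied to the special case $A := k$. By \coref{desccompcon} we already have the decomposition
\begin{equation*}
{\mathbb G} {\mathbb H}^{2} \, (k, \, V) \, = \, \amalg_{\cdot_{V}} \, {\mathbb H}^{2}_{\rm nab} \, \bigl(k, \, (V, \, \cdot_{V} )\bigl)
\end{equation*}
indexed by all $4$-algebra structures $\cdot_V$ on $V$, and by definition ${\mathbb H}^{2}_{\rm nab} \, \bigl(k, \, (V, \, \cdot_{V} )\bigl) = {\mathcal C} {\mathcal S}_{\cdot_{V}} (k, \, V)/ \approx_{\cdot_V}$. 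So all that remains is, for each fixed $\cdot_V$, to exhibit a bijection ${\mathcal C} {\mathcal S}_{\cdot_{V}} (k, \, V) \cong {\mathcal T}_{\cdot_V} \, (V)$ that respects the two equivalence relations.

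The bijection is the one implicit in \prref{cohocflagbb} (note that the only $4$-algebra structure on $A := k$ is the abelian one, so $1 \cdot_k 1 = 0$). Explicitly, to a crossed datum $(\triangleright, \, f, \, \cdot_V) \in {\mathcal C} {\mathcal S}_{\cdot_V} (k, V)$ we associate the pair $(\theta, \, F) \in {\rm End}_k (V) \times V$ given by $\theta(x) := 1 \triangleright x$ and $F := f(1, 1)$; conversely, a pair $(\theta, F)$ determines $\triangleright$ and $f$ by $k$-bilinearity. \prref{cohocflagbb} shows that under this correspondence the axioms (CS1)--(CS3) are equivalent to the compatibilities \equref{trif}, which exactly define ${\mathcal T}_{\cdot_V} \, (V)$.

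It remains to verify that the equivalence relation $\approx_{\cdot_V}$ on ${\mathcal C} {\mathcal S}_{\cdot_{V}} (k, \, V)$, as defined by \equref{compa2aa}--\equref{compa3ab}, translates into the relation on ${\mathcal T}_{\cdot_V} \, (V)$ stated in the corollary. A linear map $r: k \to V$ is uniquely determined by the element $r := r(1) \in V$. Evaluating \equref{compa2aa} at $a := 1$ and arbitrary $x \in V$ yields
\begin{equation*}
\theta (x) \, = \, \theta' (x) + r \cdot_V x,
\end{equation*}
while evaluating \equref{compa3ab} at $a := b := 1$ and using $1 \cdot_k 1 = 0$ (hence $r(1 \cdot_k 1) = 0$) gives
\begin{equation*}
F \, = \, F' + 2\, \theta' (r) + r \cdot_V r \, = \, F' + 2\, \theta' (r) + r^2,
\end{equation*}
which are precisely the relations defining $\approx_{\cdot_V}$ on ${\mathcal T}_{\cdot_V} \, (V)$. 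Hence the bijection descends to a bijection ${\mathbb H}^{2}_{\rm nab} \, \bigl(k, \, (V, \, \cdot_{V} )\bigl) \cong {\mathcal T}_{\cdot_V} \, (V) / \approx_{\cdot_V}$, and substituting into the displayed decomposition yields \equref{balsoib}. The step requiring most care is the correct bookkeeping of the bijection ${\mathcal C}{\mathcal S}(k,V) \cong {\mathcal C}{\mathcal T}(V)$ when $\cdot_V$ is held fixed, together with the observation that $1 \cdot_k 1 = 0$ kills the $r(a \cdot b)$ term in \equref{compa3ab}; no genuine obstacle arises.
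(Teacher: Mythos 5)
Your proof is correct and follows exactly the route the paper takes: the paper derives \coref{desccompcon2b} by ``Applying \coref{desccompcon}'' to $A := k$, with the bijection ${\mathcal C}{\mathcal S}\,(k, V) \cong {\mathcal C}{\mathcal T}\,(V)$ of \prref{cohocflagbb} and the translation of $\approx_{\cdot_V}$ (via $r := r(1)$, using $1 \cdot_k 1 = 0$) left implicit, which is precisely what you verify. Your careful check that \equref{compa2aa}--\equref{compa3ab} become $\theta(x) = \theta'(x) + r \cdot_V x$ and $F = F' + 2\,\theta'(r) + r^2$ just makes explicit the bookkeeping the paper omits.
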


In the last part we shall apply our results to metabelian $4$-algebras.
We recall that a $4$-algebra $E$ is called metabelian if the
derived algebra $E'$ is an abelian subalgebra of $E$, i.e.
$(a\cdot b) \cdot (c \cdot d) = 0$, for all $a$, $b$, $c$, $d \in
E$. Let $I$ be an ideal of a $4$-algebra $E$: then the quotient algebra $E/I$ is
an abelian algebra if and only if $E' \subseteq I$. Thus, $E$ is a
metabelian $4$-algebra if and only if it fits into an exact
sequence of $4$-algebras
\begin{eqnarray} \eqlabel{sirexmetab}
\xymatrix{ 0 \ar[r] & B \ar[r]^{i} & {E} \ar[r]^{\pi} & A \ar[r] &
0 }
\end{eqnarray}
where $B$ and $A$ are abelian $4$-algebras. Indeed, if $E$ is
metabelian we can take $B := E'$ and $A := E/E'$ with the obvious
morphisms. Conversely, assume that a $4$-algebra $E$ is an extension of an
abelian algebra $A$ by an abelian algebra $B$. Since $E/i(B) =
E/{\rm Ker}(\pi) \cong A$ is an abelian algebra, we obtain that
$E' \subseteq i(B) \cong B$. Hence, $E'$ is abelian as a
subalgebra in an abelian algebra, i.e. $E$ is metabelian. Using this observation,
\prref{crossprod} and \prref{hocechiv} we obtain the structure of metabelian $4$-algebras:

\begin{corollary}\colabel{metstucture}
A $4$-algebra $E$ is metabelian if and only if there exists an isomorphism of $4$-algebras $E \cong V \#_{(\triangleright, \,
f)} \, A$, where $A$ and $V$ are two vector spaces and $\triangleright : A \times V \to V$, $f: A\times A \to V$ are two bilinear maps such that $f$ is symmetric. The crossed
product $V \#_{(\triangleright, \, f)} \, A$ is the vector space $V\times A$ with the multiplication given for any $a$, $b\in A$ and $x$, $y\in V$ by:
\begin{equation}\eqlabel{inmetab}
(x, \, a) \circ (y, \, b) := (a \triangleright y + b \triangleright x  +  f(a, \, b), \, 0). 
\end{equation}
\end{corollary}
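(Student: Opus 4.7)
My plan is to combine the exact sequence characterisation of metabelian $4$-algebras sketched in the paragraph preceding the statement with the crossed product decomposition of \prref{hocechiv}, then observe that the crossed system axioms collapse almost entirely.

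For the ``only if'' direction, I would invoke the (already justified) fact that any metabelian $E$ fits into an exact sequence \equref{sirexmetab} with $A := E/E'$ and $B := E'$ both abelian. Applying \prref{hocechiv} to the canonical surjection $\pi : E \to A$ produces an isomorphism $E \cong V \#_{(\triangleright,\, f,\, \cdot_V)} A$ with $V := \ker\pi \cong i(B) = B$. Because $V$ is an abelian subalgebra of $E$, formula \equref{multip} forces $\cdot_V = 0$; similarly $A$ abelian gives $a \cdot b = 0$ for all $a, b \in A$. Substituting these vanishings into \equref{hoproduct2} recovers exactly \equref{inmetab}. What remains is to verify that the axioms (CS1)--(CS3) of \prref{crossprod} impose nothing beyond the symmetry of $f$: (CS1) holds automatically once $f$ is symmetric and $\cdot_V = 0$, while each term in (CS2) and (CS3) contains either $a^2$ (zero in $A$), or $x^2$ or a $\cdot_V$-product (both zero in $V$), so both identities hold tautologically.

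For the converse, I would check directly that any $4$-algebra defined by \equref{inmetab} is metabelian: every product lies in $V \times \{0\}$, so $E' \subseteq V \times \{0\}$, and the product of two elements of $V \times \{0\}$ via \equref{inmetab} equals $(f(0,0),\, 0) = (0,0)$ by bilinearity of $f$; hence $E' \cdot E' = 0$. The heavy lifting has already been carried by \prref{crossprod} and \prref{hocechiv}, so I do not foresee any real obstacle; the only genuine content is the observation that metabelianness forces both factors to carry the trivial multiplication, at which point (CS2)--(CS3) are vacuously satisfied and every bilinear $\triangleright$ paired with every symmetric bilinear $f$ yields a valid crossed system $(\triangleright, f, 0)$.
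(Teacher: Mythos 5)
Your proposal is correct and follows essentially the same route as the paper, which likewise obtains the corollary by combining the exact-sequence characterisation of metabelianness (the paragraph preceding the statement) with \prref{hocechiv} and \prref{crossprod}, the only real content being that abelianness of $A = E/E'$ and $V = E'$ kills $\cdot_V$ and the product on $A$, renders (CS2)--(CS3) vacuous, and reduces \equref{hoproduct2} to \equref{inmetab}. Your explicit check of the converse directly from \equref{inmetab} is a harmless elaboration of what the paper leaves implicit (a crossed product with both factors abelian sits in an exact sequence with abelian ends, hence is metabelian).
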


Let now $A$ and $V$ be two fixed vector spaces viewed with the abelian $4$-algebra structure $A_0$ and $V_0$: in the next step we shall classify all
metabelian $4$-algebras that are extensions of $A$ by $V$, that is we classify all crossed products $V \#_{(\triangleright, \,
f)} \, A$ up to an isomorphism that stabilizes $V$ and co-stabilizes $A$. For this purpose, we have to compute the classifying object
${\mathbb H}^{2}_{\rm nab} \, \bigl(A, \, (V, \, \cdot_{V} )\bigl)$ from \coref{desccompcon} and \coref{cazuabspargere} in the case that
both $\cdot_V$ and $\cdot_A$ are trivial multiplication. Let $\triangleright : A\times V \to V$ be a fixed bilinear map and ${\rm Sym} (A\times A; \, V)$
the set of all symmetric bilinear maps $f: A\times A \to V$. Two elements $f$ and $f' \in {\rm Sym} \, (A\times A; \,  V)$ are called \emph{$\triangleright$-cohomologous} $f \approx_{\triangleright} \, f'$ if and only if there exists a linear map $r: A\to V$ such that:
\begin{equation} \eqlabel{100}
f (a, \, b) = f '(a, \, b) + a \triangleright  r(b) + b \triangleright r(a)
\end{equation}
for all $a$, $b\in A$ (i.e. \equref{cazslab} holds for the trivial multiplication on $A$). We denote by
${\rm H}^2_{\triangleright} (A_0, \, V_0) := {\rm Sym} \, (A\times A; \, V)/ \approx_{\triangleright}$.
We have obtained the following:

\begin{corollary}\colabel{cazulmeatabclas}
Let $A$ and $V$ be two vector spaces viewed with the abelian
algebra structure $A_0$ and $V_0$. Then there exists a bijection:
\begin{equation} \eqlabel{metcalsex}
{\mathbb H}^{2}_{\rm nab} \, \bigl(A_0, \, V_0 \bigl) \, \cong \,
\amalg_{\triangleright} \, {\rm H}^2_{\triangleright} \, (A_0, \, V_0)
\end{equation}
where the coproduct on the right hand side is taken over all bilinear maps $\triangleright : A \times V
\to V$.
\end{corollary}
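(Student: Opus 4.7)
The plan is to derive this corollary as a direct specialization of \coref{cazuabspargere} to the case where the multiplication on $A$ is trivial, i.e., $A = A_0$. That corollary already provides, for an arbitrary $4$-algebra $A$, the decomposition
\[
{\mathbb H}^{2}_{\rm nab}\bigl(A, V_0\bigl) = \amalg_{\triangleright} \, {\rm H}^2_{\triangleright}(A, V_0)
\]
as a coproduct indexed by $A$-module structures $\triangleright$ on $V$. So the first step is to invoke that result verbatim with $A$ replaced by $A_0$.

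Next I would verify the two simplifications that take place when the multiplication on $A$ is trivial. First, the $A_0$-module axiom $a^2 \triangleright (a \triangleright x) = 0$ appearing in \equref{celmais} is automatically satisfied, since $a^2 = a \cdot a = 0$ in $A_0$; consequently every bilinear map $\triangleright : A \times V \to V$ qualifies as an $A_0$-module structure on $V$, so the coproduct becomes indexed by \emph{all} bilinear maps $\triangleright$, matching the right-hand side of \equref{metcalsex}. Second, the symmetric abelian $2$-cocycle condition $f(a^2, a^2) + 2\, a^2 \triangleright f(a, a) = 0$ likewise reduces to $0 + 0 = 0$, so the cocycle set collapses to ${\rm Z}^2_{\triangleright}(A_0, V_0) = {\rm Sym}(A\times A; \, V)$, which is precisely the set appearing in the definition of ${\rm H}^2_{\triangleright}(A_0, V_0)$ in the corollary.

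Finally, I would check that the equivalence relation $\approx_0$ from \equref{cazslab} collapses to $\approx_{\triangleright}$ as given in \equref{100}. Indeed, \equref{cazslab} reads
\[
f(a, b) = f'(a, b) + a \triangleright r(b) + b \triangleright r(a) - r(a\cdot b),
\]
and since $a \cdot b = 0$ in $A_0$ the last term disappears, yielding exactly \equref{100}. Combining these three observations with \coref{cazuabspargere} gives the asserted bijection. I do not anticipate any real obstacle: the argument is simply a book-keeping exercise in tracing how the abelianness of $A$ trivializes the module and cocycle axioms, and the decomposition itself is inherited from \coref{cazuabspargere}.
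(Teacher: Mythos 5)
Your proposal is correct and follows essentially the same route as the paper: the text preceding \coref{cazulmeatabclas} derives it exactly by specializing \coref{cazuabspargere} to the abelian structure $A_0$, where the $A$-module axiom and the symmetric abelian $2$-cocycle condition of \equref{celmais} hold automatically because $a^2 = a\cdot a = 0$, so the coproduct runs over all bilinear maps $\triangleright$ and the cocycle set becomes all of ${\rm Sym}(A\times A;\, V)$. Your final observation that the term $r(a\cdot b)$ vanishes in \equref{cazslab}, reducing $\approx_0$ to $\approx_{\triangleright}$ as in \equref{100}, is precisely the paper's remark, so there is no gap.
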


\begin{examples} \exlabel{calcexpb}
1. Let $V$ be a vector space with a basis $\{e_i \, | \, i\in I\}$ viewed with the abelian $4$-algebra structure $V_0$. Then
$$
{\mathbb H}^{2}_{\rm nab} \, \bigl(k_0, \, V_0 \bigl) \, \cong \,
\amalg_{g\in {\rm End}_k (V) } \,\,  V/ {\rm Im} (g). 
$$
In particular, any $4$-algebra $E$ containing $V$ as an abelian ideal of codimension $1$ is isomorphic to the $4$-algebra
with the basis $\{e, \, e_i \, | \, i\in I\}$ and the multiplication given for any $i\in I$ by:
$$
e \circ e_i := g(e_i), \qquad e^2 :=f_0
$$
for some $g\in {\rm End}_k (V)$ and $f_0 \in V$. Indeed, since $A := k$ any bilinear map $\triangleright : k \times V \to V$
has the form $\alpha \triangleright x = \alpha g(x)$, for a linear map $g\in {\rm End}_k (V)$ and the correspondence is bijective. Moreover, the set of all symmetric bilinear maps $f: k \times k \to V$ is in bijection with the set of all elements of $V$ (the bijection maps $f$ to $f_0 := f(1, 1))$. The conclusion follows from \coref{cazulmeatabclas} once we observe that the equivalent relation \equref{100} written for the set of all elements $f_0 \in V$ comes down to $f_0 \approx_{\triangleright} \, f_0'$ if and only if
$f_0 - f_0' \in {\rm Im} (g)$. For the last part we use \equref{inmetab} of \coref{metstucture} since any such $4$-algebra is metabelian.

2. The other way around, let $A$ be a vector space with a basis $\{f_j \, | \, j\in J\}$ viewed with the abelian $4$-algebra structure $A_0$. Then,
$$
{\mathbb H}^{2}_{\rm nab} \, \bigl(A_0, \, k_0 \bigl) \, \cong \,
\amalg_{\lambda \in A^*} \,\, {\rm Sym} (A\times A; \, k)/ \approx_{\lambda}
$$
where, for any linear map $\lambda \in A^* ={\rm Hom}_k (A, \, k)$, $\approx_{\lambda}$ is the following equivalent relation of the set of all symmetric bilinear forms on $A$:
$f \approx_{\lambda} \, f'$ if and only if there exists a linear map $r \in A^*$ such that
for any $a$, $b\in A$ :
$$
f (a, \, b) = f '(a, \, b) + r (a) \lambda (b) + r(b) \lambda (a). 
$$
Furthermore, any $4$-algebra $E$ having the derived subalgebra of dimension $1$ is isomorphic to the $4$-algebra
with the basis $\{f, \, f_j \, | \, j\in J\}$ and the multiplication given for any $j$, $l\in J$ by:
$$
f \circ f_j := \lambda (f_j) \, f, \qquad f_j \circ f_l := f(f_j, \, f_l) \, f
$$
for some $\lambda \in A^*$ and $f\in {\rm Sym} (A\times A; \, k)$, where $A$ is the abelian $4$-algebra $A : = E/E'$.
\end{examples}

\textbf{Acknowledgment:} The author warmly thanks the referee for his/her suggestions which improved the first version of the paper.

\section{Competing interests declaration}
The author declares none.

\end{document}